\newcommand{\nc}{\newcommand}
\nc{\dmo}{\DeclareMathOperator}
\dmo{\ra}{\rightarrow}
\dmo{\Prob}{\mathbb{P}}
\dmo{\E}{\mathbb{E}}
\dmo{\N}{\mathbb{N}}
\dmo{\Z}{\mathbb{Z}}
\dmo{\inte}{int}
\dmo{\Q}{\mathbb{Q}}
\dmo{\R}{\mathbb{R}}
\dmo{\C}{\mathcal{C}}
\dmo{\X}{\mathcal{X}}
\dmo{\U}{\mathcal{U}}
\dmo{\T}{\mathcal{T}}
\dmo{\F}{\mathcal{F}}
\dmo{\AC}{\mathcal{AC}}
\dmo{\w}{\omega}
\dmo{\MIN}{\mathcal{MIN}}
\dmo{\Mod}{Mod}
\dmo{\PMod}{PMod}
\dmo{\PMF}{\mathcal{PMF}}
\dmo{\Mat}{Mat}
\dmo{\supp}{supp}
\dmo{\UE}{\mathcal{UE}}
\dmo{\vol}{vol}
\dmo{\B}{B}
\dmo{\PB}{PB}
\dmo{\PR}{PSL(2,\mathbb{R})}
\dmo{\GL}{GL(k, \mathbb{C})}
\dmo{\SL}{SL(2, \mathbb{Z})}
\dmo{\Isom}{Isom}
\dmo{\RP}{\mathbb{R} \mathrm{P}}
\dmo{\I}{\mathcal{I}}
\dmo{\el}{\ell_{\C}}
\dmo{\NN}{\mathcal{N}}
\dmo{\rk}{rank}
\dmo{\tr}{tr}
\dmo{\llangle}{\langle\langle}
\dmo{\rrangle}{\rangle\rangle}
\dmo{\Unif}{Unif}
\dmo{\Out}{Out}
\dmo{\Homeo}{Homeo}
\dmo{\sumRho}{\mathcal{N}}
\dmo{\stopping}{\vartheta}
\dmo{\diam}{\operatorname{diam}}
\tikzset{->-/.style={decoration={
  markings,
  mark=at position #1 with {\arrow{>}}},postaction={decorate}}}
\nc{\nt}{\newtheorem}
\newtheorem{thm}{{\bf Theorem}}[section]
\newtheorem{defn}[thm]{{\bf Definition}}
\newtheorem{cor}[thm]{{\bf Corollary}}
\newtheorem{prop}[thm]{{\bf Proposition}}
\newtheorem{fact}[thm]{Fact}
\newtheorem{claim}[thm]{Claim}
\newtheorem*{ques}{Question}
\newtheorem{dfn}[thm]{Definition}
\numberwithin{equation}{section}
\newtheorem{obs}[thm]{Observation}
\title[WPD of some Homeos]{Metric weak proper discontinuity for pseudo-Anosov maps}
\date{\today}
\author{Inhyeok Choi}
\address{%
		Cornell University\\
		310 Malott Hall, Ithaca, NY, USA \\ \newline
		June E Huh Center for Mathematical Challenges, KIAS\\
		85 Hoegi-ro, Dongdaemun-gu, Seoul 02455, South Korea
}
\email{
        inhyeokchoi48@gmail.com
        }
\begin{document}
\begin{abstract}
We present a metric version of weak proper discontinuity (WPD) for pseudo-Anosov maps on surfaces. As an application, we show that there are plenty of quasi-morphisms on the homeomorphism group of a torus or a hyperbolic surface that are unbounded on certain homeomorphisms with large fixed set.

\noindent{\bf Keywords.} Mapping class group, pseudo-Anosov map, counting problem, weakly contracting axis

\noindent{\bf MSC classes:} 20F67, 30F60, 57K20, 57M60, 60G50
\end{abstract}

\maketitle

%
%

\section{Introduction}

This paper concerns the homeomorphism group $\Homeo(S)$ of a closed orientable surface $S$ of genus at least 1. Being a topological group with countably infinitely many connected components, $\Homeo(S)$ can be studied at the level of the identity component $\Homeo_{0}(S)$, the group of nullhomotopic homeomorphisms or at the level of the quotient group $\Mod(S)$, the mapping class group.

At the end of the 20th century, H. Masur and Y. Minsky proposed a new theory of the curve graph $\mathcal{C}(S)$, which shed light on new aspects of the mapping class group $\Mod(S)$ \cite{masur1999curve}. While mapping class groups are not hyperbolic nor CAT(0) in general, Masur and Minsky proved that $\mathcal{C}(S)$ is Gromov hyperbolic and $\Mod(S)$ acts non-elementarily on $\mathcal{C}(S)$. More specifically, they showed that pseudo-Anosovs act as a loxodromic on $\mathcal{C}(S)$. Although $\mathcal{C}(S)$ is not locally compact and the $\Mod(S)$-action on $\mathcal{C}(S)$ is not proper, this action does imply several geometric features of $\Mod(S)$ that is shared with (relatively) hyperbolic groups, cf. \cite{masur2000curve}.

For instance, M. Bestvina and K. Fujiwara \cite{bestvina2002bounded} observed that pseudo-Anosov mapping classes have the \emph{weak proper discontinuity} (WPD property). Roughly speaking, given a group $G$ acting on a metric space $X$, $g \in G$ is said to have the WPD property if the joint coarse stabilizer of distant $g$-orbit points is a finite set.  Using the WPD property of pseudo-Anosovs, Bestvina and Fujiwara constructed an infinite family of non-quasi-invertible pseudo-Anosovs in $\Mod(S)$, which in turn implies that the space $\widetilde{QH}(\Mod(S))$ of quasi-morphisms on $\Mod(S)$ (modulo bounded quasi-morphisms and homomorphisms) is infinite dimensional. This feature is shared with free groups \cite{MR624804} and hyperbolic groups \cite{MR1452851}. Bestvina-Fujiwara's WPD property of $\Mod(S)$-action on $\mathcal{C}(S)$ was strengthened into acylindricity by B. Bowditch \cite{bowditch2008tight}.

In \cite{bowden2022quasi}, J. Bowden, S. Hensel and R. Webb initiated the study of the \emph{fine curve graph} $\C*(S)$ of a surface $S$, an analogue of the classical curve graph $\mathcal{C}(S)$, for $\Homeo(S)$. By comparing the fine curve graph and the curve graph of punctured surfaces $S \setminus P$ for finite sets $P \subseteq S$, the authors found a non-quasi-invertible homeomorphism in $\Homeo_{0}(S)$ with respect to the action on $\C*(S)$. Using this fact and Bestvina-Fujiwara's theory \cite{bestvina2002bounded}, the authors deduced the infinite dimensionality of $\widetilde{QH}(\Homeo_{0}(S))$.

In \cite{bowden2024towards}, the three authors further investigated the dynamics of pseudo-Anosov maps relative to a finite set $P \subseteq S$. As a result, the authors proved that pseudo-Anosov maps  that are non-quasi-invertible at the level of mapping class (in $\Mod(S \setminus P)$) are non-quasi-invertible in $\Homeo(S)$ as well.

In this note, we make an observation following the theory of Bowden, Hensel and Webb. Note that $\Homeo_{0}(S)$ cannot have a loxodromic isometry of $\C*(S)$ with the WPD property, as it does not have nontrivial normal subgroups. A geometric reason for the absence of the WPD property is that the joint coarse stabilizer of any finite number of curves  $x_{1}, \ldots, x_{n}$ is an open (infinite) set in $\Homeo_{0}(S)$: a small perturbation of a map $\varphi$ leads to small perturbation of $\varphi x_{i}$. Nonetheless, one can hope that a joint coarse stabilizer of far enough $\varphi$-orbit points consist of finitely many translates of ``small" maps. We present such a ``metric" version of the WPD property of pseudo-Anosov maps.

\begin{defn}
Let $S$ be a closed orientable surface of genus at least 1, equipped with a metric $d$. We say that a homeomorphism $\varphi \in \Homeo(S)$ is an $\epsilon$-coarse identity of $S$ if $d(p, \varphi p) < \epsilon$ for each $p \in S$.
\end{defn}

\begin{thm}\label{thm:main}
Let $S$ be a closed orientable surface of genus at least 1 with metric $d$, and  let $\varphi \in \Homeo(S)$ be a homeomorphism. Suppose that there exists a $\varphi$-invariant finite set $P \subseteq S$ such that $\varphi|_{S \setminus P}$ is a pseudo-Anosov homeomorphism on $S \setminus P$. Then for each $\epsilon > 0$, for each simple closed curve $x_{0} \in \C*(S)$ and for each $L>0$, there exists $N, M>0$ and a finite set of homeomorphisms $\{h_{1}, \ldots, h_{N}\}$ in $\Homeo(S)$ such that \[
\begin{aligned}
&\big\{ g \in \Homeo(S) : d_{\C*}(\varphi^{-N}x_{0}, g\varphi^{-N}x_{0}) < L\,\, \textrm{and} \,\,d_{\C*}(\varphi^{N} x_{0}, g \varphi^{N} x_{0}) < L \big\} \\
&\subseteq \big\{ hh_{i} : i=1, \ldots, M, \textrm{$h$ is an $\epsilon$-coarse identity of $S$}\big\}.
\end{aligned}
\]
\end{thm}

As a corollary, we construct quasi-morphisms associated to certain loxodromic isometries of $\C*(S)$ that are \emph{not} pseudo-Anosov maps.

\begin{thm}\label{thm:scl}
Let $S$ be a surface of genus at least $1$. Then there exists a homeomorphism $\varphi \in \Homeo_{0}(S)$ whose fixed point set has nonempty interior and whose stable commutator length (in $\Homeo(S)$) is nonzero.
\end{thm}

\begin{thm}\label{thm:inf}
Let $S$ be a close orientable surface of genus at least $1$ and let $D$ be an embedded closed disc on $S$. Let $Fix_{D} \le \Homeo(S)$ be the group of homeomorphisms that preserve $D$ pointwise. Then the space $\widetilde{QH}\big(\Homeo(S); Fix_{D} \cap \Homeo_{0}(S)\big)$ of quasi-morphisms on $\Homeo(S)$ that is unbounded on $Fix_{D} \cap \Homeo_{0}(S)$ is infinite dimensional.
\end{thm}

There is another cute application of Theorem \ref{thm:main} for random walks on $\Homeo(S)$. Let $\mu$ be a discrete probability measure on $\Homeo(S)$, i.e., $\supp \mu$ is a countable subset of $\Homeo(S)$. By convoluting $\mu$ from the right, one can run a (right) $\mu$-random walk on $\Homeo(S)$ and observe it on $\C*(S)$. If the subsemigroup $\llangle \supp \mu \rrangle$ generated by $\supp \mu$ contains two independent loxodromic isometries of $\C*(S)$, then the almost every sample path on $\C*(S)$ converges to a boundary point in $\partial \C*(S)$. This gives the hitting measure $\nu$ on $\partial \C*(S)$. One can then ask if $(\partial \C*(S), \nu)$ captures almost all measurable information of $\mu^{\ast \Z_{>0}}$. In other words, we can ask if $(\partial \C*(S), \nu)$ is the \emph{Poisson boundary} for $(\Homeo(S), \mu)$.

This sort of question has rich history; see \cite{kaimanovich1994poisson}, \cite{kaimanovich1996poisson} and \cite{maher2018random} for references. This question is usually answered with some conditions on $\mu$. For example, one usually assumes that $\mu$ has finite first moment (with respect to the hyperbolic metric--the fine curve graph distance in this case), finite logarithmic moment or finite entropy (which is purely about the weight distribution of $\mu$ and does not involve the $\C*(S)$-distance). In this direction, one of the most general answers is as follows:

\begin{prop}[{\cite[Theorem 1.2]{chawla2022the-poisson}}]
Let $G$ be a countable group with a non-elementary action on a geodesic hyperbolic space $X$ with at least one WPD element. Let $\mu$ be a generating probability measure
on $G$ with finite entropy. Let $\partial X$ be the hyperbolic boundary of $X$, and let $\nu$ be the hitting measure of the random walk driven by $\mu$. Then the space $(\partial X, \nu)$ is the Poisson boundary of $(G, \mu)$.
\end{prop}

Since the fine curve graph $\C*(S)$ is Gromov hyperbolic (\cite[Theorem 3.10]{bowden2022quasi}), we can apply this proposition. Let us say that a subgroup $G \le \Homeo(S)$ is \emph{locally discrete} if it is discrete in $C^{0}$-topology. Our Theorem \ref{thm:main} implies that pseudo-Anosov maps are WPD (in the classical sense) \emph{inside discrete subgroups of $\Homeo(S)$}. Hence, we have:

\begin{cor}[{\cite[Theorem 1.2]{chawla2022the-poisson}}]
Let $S$ be a closed orientable surface of genus $\ge 1$ and let $G$ be a non-virtually cyclic, locally discrete countable subgroup of $\Homeo(S)$ that contains a pseudo-Anosov map $\varphi$ (relative to a finite subset of $S$).  Let $\mu$ be a generating probability measure on $G$ with finite entropy. Let $\nu$ be the hitting measure on $\partial \C*(S)$ of the random walk driven by $\mu$. Then the space $(\partial \C*(S), \nu)$ is the Poisson boundary of $(G, \mu)$.
\end{cor}

While preparing this paper, the author was made aware of an independent work of S. Hensel and F. Le Roux \cite{hensel2025rotation}. In the case of the torus $\mathbb{T}^{2}$,  they have proved that elements of $\Homeo_{0}(\mathbb{T}^{2})$ that act loxodromically on $\C*(\mathbb{T}^{2})$ have the metric WPD property. This is more general than the result of Theorem \ref{thm:main}. Indeed,  loxodromic isometries of $\Homeo_{0}(\mathbb{T}^{2})$ include not only pseudo-Anosov maps on $\mathbb{T}^{2}$, but also maps homotopic to a pseudo-Anosov map with respect to the singularity set of the pA map. Meanwhile, we deal with higher genus case as well as the torus. Our method and their method are different; their approach builds upon the theory of rotation sets. Since there are higher genus version of rotation sets studied by P-A. Guih{\'e}neuf and E. Militon \cite{guiheneuf2023hyperbolic}, it is natural to ask the following question:

\begin{ques}
For $g \ge 2$, does every loxodromic isometry of $\C*(S_{g})$ have metric WPD property?
\end{ques}

\subsection*{Acknowledgement}

The author thanks Sebastian Hensel and Kathryn Mann for the discussion about the dynamics of pseudo-Anosov maps on the fine curve graph. The author is also grateful to Sebastian Hensel and Fr{\'e}d{\'e}ric Le Roux for personally sharing their manuscript.

Part of this work was done while the author was visiting June E Huh Center for Mathematical Challenges at KIAS in June 2025.

\section{Preliminaries}

Throughout the paper, $S$ denotes a closed orientable surface of genus $g \ge 1$. We recall the following notion of Bowden, Hensel and Webb (\cite[Definition 2.5]{bowden2024towards}): a bifoliated singular flat (BSF) structure on $S$ is a tuple $q=(\mathcal{F}_{v}, \mathcal{F}_{h},d)$ of a singular flat metric $d$, all of whose cone angles are multiples of $\pi$, and transverse (singular) foliations $\mathcal{F}_{v}, \mathcal{F}_{h}$ which are locally geodesic and orthogonal for $d$ away from the cone points. 

In this paper, the following fact is relevant. Let $P$ be a finite subset of $S$ and let $\varphi$ be a pseudo-Anosov map $\varphi$ on $S \setminus P$. Then there exists a BSF structure $q$ on $S$ that comes from the unstable and the stable measured foliations for $\varphi$. The two transverse foliations for $q$ are \emph{ending}, i.e., every pseudo-leaf is simply-connected and contains at most 1 singularity point.

Let $S$ be a surface with a fixed metric $d$, often coming from a BSF structure. An \emph{$\epsilon$-coarse stabilizer} is a homeomorphism $\varphi \in \Homeo(S)$ such that $d(p, \varphi p) < \epsilon$ for each $p \in S$.

For a point $x \in \C*(S)$ and $L>0$, we say that a homeomorphism $\varphi \in \Homeo(S)$ \emph{$L$-stabilizes} $x$ if $d_{\C*(S)}(x, \varphi x) < L$. For $x, y \in \C*(S)$ and $L>0$ we define the $L$-joint stabilizer \[
Stab_{L}(x, y) := \{ \varphi \in \Homeo(S) : d_{\C*(S)} (x, \varphi x) < L \wedge d_{\C*(S)} (y, \varphi y) < L\}.
\]

Bowden, Hensel and Webb studied in \cite{bowden2024towards} the dynamics of a pseudo-Anosov map $\varphi$ on $\C*(S)$, providing a general toolkit to describe the convergence to the forward/backward fixed points of $\varphi$ in $\partial \C*(S)$. We gather some of their results here.

\begin{thm}[{Target lemma, \cite[Lemma 3.6]{bowden2024towards}}]\label{thm:target}
Let $q$ be a BSF structure on $S$ with ending vertical foliation. Then for any $B_{target}, \epsilon_{target}>0$ there exists $L_{target}>0$ such that the following holds.

Suppose that $\gamma : [0, 1] \rightarrow S$ is a path so that the total width of $\gamma$ is at least $\epsilon_{target}$ and the size of $\gamma$ is at most $B_{target}$. Then any vertical geodesic for $q$ of length at least $L_{target}$ intersects $\gamma$.
\end{thm}

\begin{thm}[{Escape lemma, \cite[Lemma 4.7]{bowden2024towards}}]\label{thm:escape}
Let $q$ be a BSF structure on $S$ and let $x_{0} \in \C*(S)$ be a simple closed curve on $S$. Then for any $L_{escape}>0$ there exists $R_{escape}>0$ such that, if $\beta$ is a simple closed curve of ($q$-)size at most $L_{escape}$, then $d_{\C*}(\beta, x_{0}) \le R_{escape}$.
\end{thm}

\begin{thm}[{Small width lemma, \cite[Lemma 4.6]{bowden2024towards}}]\label{thm:small}
Let $q$ be a BSF structure on $S$ with ending vertical foliation $\mathcal{F}_{v}$. Let $x_{0} \in \C*(S)$ be a simple closed curve. Then for any $B_{small}, \epsilon_{small}>0$ there exists $K_{small}>0$ such that the following holds.

Let $\beta$ be a simple closed curve such that $(\beta, \mathcal{F}_{v})_{x_{0}} > K_{small}$. Then every subsegment of $\beta$ of size $B_{small}$ has total horizontal width at most $\epsilon_{small}$, and thus $\epsilon_{small}$-fellow travels a (possibly singular) leaf of $\mathcal{F}_{v}$.
\end{thm}

\begin{dfn}\label{dfn:strip}
Let $q$ be a BSF structure on $S$. We say that a region $R \subseteq S$ is a \emph{strip} if it is an isometric embedding of the Euclidean rectangle into $S$ respecting horizontal and vertical directions. It is called \emph{horizontal} if the width is larger than the height, and \emph{vertical} if not.

Let $\mathcal{S}$ be a strip in $S$. We say that a path $\gamma$ \emph{traverses $\mathcal{S}$ horizontally} if $\gamma$ connects the two vertical sides while staying inside $\inte \mathcal{S}$ during the journey. We similarly define paths transversing $\mathcal{S}$ vertically.
\end{dfn}

\begin{dfn}\label{dfn:quasi}
We say that two paths $\alpha$ and $\beta$ on $S$ are \emph{quasi-transverse} if they have finitely many intersections. 
\end{dfn}

\begin{dfn}\label{dfn:bifoliated}
Let $q$ be a BSF structure on $S$. A \emph{bifoliated square} $\mathcal{U}$ on $S$ is the image of an isometric embedding $i : (-\epsilon, \epsilon) \times (-\epsilon, \epsilon) \rightarrow S$ for some $\epsilon > 0$ that respects the horizontal and standard directions. In this case, $i(0, 0)$ is called the \emph{center} of $\mathcal{U}$ and $\mathcal{U}$ is said to have the side length $2\epsilon$. Furthermore, $\mathcal{U}$ is called a \emph{bifoliated standard neighborhood} of $i(0, 0)$.
\end{dfn}
Note that a bifoliated square with side length $2\epsilon$ has diameter $\le 4\epsilon$.

In general, we can think of a \emph{bifoliated (regular) $2k$-gon} $\mathcal{U}$ on $S$ for $k \ge 1$. It is straightforward to define the center and the side length of $\mathcal{U}$. We say that $\mathcal{U}$ is a \emph{bifoliated standard neighborhood} of the center of $\mathcal{U}$.

When $S$ is given a BSF structure, every point $p$ of $S$ admits a bifoliated standard neighborhood. If $p$ is a $k$-fold singularity, then such a neighborhood is a bifoliated $2k$-gon.

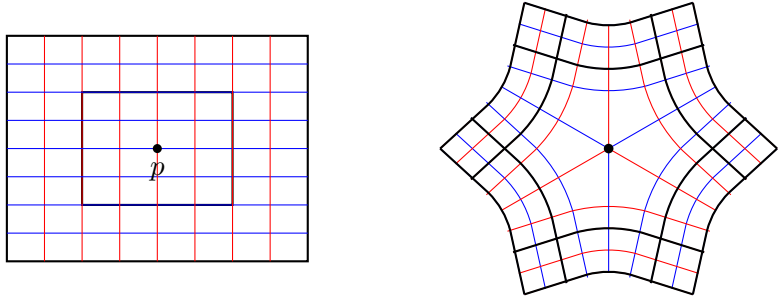
\begin{figure}
\begin{tikzpicture}
\def\c{1}
\draw[thick] (0, 0) -- (4*\c, 0) -- (4*\c, 3*\c) -- (0, 3*\c) -- cycle;
\draw[thick] (1*\c, 0.75*\c) -- (3*\c, 0.75*\c) -- (3*\c, 2.25*\c) -- (\c, 2.25*\c) -- cycle;
\foreach \i in {1, ..., 7}{
\draw[color=blue] (0, 3*\i*\c/8) -- (4*\c, 3*\i*\c/8);
\draw[color= red] (4*\i*\c/8, 0) -- (4*\i*\c/8, 3*\c);
}
\fill (2*\c, 1.5*\c) circle (0.06);
\draw (2*\c, 1.2*\c) node {$p$};

\begin{scope}[shift={(8*\c, 1.5*\c)}, scale=0.8]

\foreach \j in {0, 1, 2}{
\foreach \i in {0, ..., 3}{
\draw[rotate=120*\j, color=blue] ({-2.8*\c*cos(60) - 0.35*\c*\i*tan(15)}, {2.8*\c*sin(60) - 0.35*\c*\i}) -- ({0 - 1.4*\c*sin(15)- 0.35*\c*\i*tan(15)}, {2.8*\c*sin(60) - 2.8*\c*cos(60)*tan(15) + 1.4*\c - 1.4*\c*cos(15) - 0.35*\c*\i}) arc (-105:-75:{1.4*\c + 0.35*\i /cos(15)})  -- ({2.8*\c*cos(60)+0.35*\c*\i*tan(15)}, {2.8*\c*sin(60) - 0.35*\c*\i});
}
\draw[rotate=120*\j, color=red] (0, 0) -- (0, {2.8*\c*sin(60) - 2.8*\c*cos(60)*tan(15) });
}

\foreach \j in {0, 1, 2}{
\foreach \i in {0, ..., 3}{
\draw[rotate=120*\j + 60, color=red] ({-2.8*\c*cos(60) - 0.35*\c*\i*tan(15)}, {2.8*\c*sin(60) - 0.35*\c*\i}) -- ({0 - 1.4*\c*sin(15)- 0.35*\c*\i*tan(15)}, {2.8*\c*sin(60) - 2.8*\c*cos(60)*tan(15) + 1.4*\c - 1.4*\c*cos(15) - 0.35*\c*\i}) arc (-105:-75:{1.4*\c + 0.35*\i /cos(15)})  -- ({2.8*\c*cos(60)+0.35*\c*\i*tan(15)}, {2.8*\c*sin(60) - 0.35*\c*\i});
}
\draw[rotate=120*\j+60, color=blue] (0, 0) -- (0, {2.8*\c*sin(60) - 2.8*\c*cos(60)*tan(15) });
}

\fill (0,0) circle (0.08);

\foreach \j in {0, ..., 5}{
\foreach \i in {0, 2}{
\draw[rotate=60*\j, thick] ({-2.8*\c*cos(60) - 0.35*\c*\i*tan(15)}, {2.8*\c*sin(60) - 0.35*\c*\i}) -- ({0 - 1.4*\c*sin(15)- 0.35*\c*\i*tan(15)}, {2.8*\c*sin(60) - 2.8*\c*cos(60)*tan(15) + 1.4*\c - 1.4*\c*cos(15) - 0.35*\c*\i}) arc (-105:-75:{1.4*\c + 0.35*\i /cos(15)})  -- ({2.8*\c*cos(60)+0.35*\c*\i*tan(15)}, {2.8*\c*sin(60) - 0.35*\c*\i});
}
}

\end{scope}
\end{tikzpicture}

\caption{Bifoliated regular $2k$-gons for $k=2$ and $k=3$.}
\label{fig:bifold}
\end{figure}

\section{Proof}

We will in fact prove the following variant of Theorem \ref{thm:main}:

\begin{thm}\label{thm:mainVar}
Let $S$ be a surface of genus at least 1 and let $\varphi \in \Homeo(S)$ be a homeomorphism. Suppose that there exists a $\varphi$-invariant finite set $P \subseteq S$ such that $\varphi|_{S \setminus P}$ is a pseudo-Anosov homeomorphism on $S \setminus P$. Then for each $\epsilon > 0$, for each simple closed curve $x_{0} \in \C*(S)$ and $L>0$, there exists $N>0$ such that the following holds.

Let $(g_{1}, g_{2}, \ldots) \in \Homeo(S)$ be an infinite sequence of homeomorphisms that $L$-stabilize $\varphi^{-N} x_{0}$ and $\varphi^{N} x_{0}$. Then there exists $i \neq j$ such that $g_{i}g_{j}^{-1}$ is an $\epsilon$-coarse identity.
\end{thm}

\begin{proof}[Proof of Theorem \ref{thm:main} assuming Theorem \ref{thm:mainVar}]
For given $\varphi, \epsilon, x_{0}$ and $L$, let $N$ be the constant as in Theorem \ref{thm:mainVar}. We now construct a sequence in the joint $L$-stabilizer $Stab_{L}(\varphi^{-N} x_{0}, \varphi^{N} x_{0})$. We first pick arbitrary element in $Stab_{L}(\varphi^{-N} x_{0}, \varphi^{N} x_{0})$ as $g_{1}$. Having chosen $g_{1}, \ldots, g_{n}$, we pick $g_{n+1} \in Stab_{L}(\varphi^{-N} x_{0}, \varphi^{N} x_{0})$ that is outside $\cup_{i=1}^{n} \{\textrm{$\epsilon$-coarse identities}\} \cdot g_{i}$, if there is any. If there is no such element, we halt.

If this process continues indefinitely, then we obtain an infinite sequence $(g_{1}, g_{2}, \ldots)$ in  $Stab_{L}(\varphi^{-N} x_{0}, \varphi^{N} x_{0})$ such that $g_{i}g_{j}^{-1}$ is not an $\epsilon$-coarse identity for $i \neq j$. This contradicts Theorem \ref{thm:mainVar}. Hence the process must halt at some finite step. The desired conclusion of Theorem \ref{thm:main} follows.
\end{proof}

Let us now prove Theorem \ref{thm:mainVar}.

\begin{proof}[Proof of Theorem \ref{thm:mainVar}]
Suppose that $\varphi$, $\epsilon$ and $L$ are given. Let $\mathcal{F}_{h}$ and $\mathcal{F}_{v}$ be the horizontal and the vertical foliations, respectively, for $\varphi|_{S \setminus P}$. Let $q$ be the BSF structure for $\varphi$.

Because $S$ is equipped with the BSF structure $q$, every point $p \in S$ admits a bifoliated standard neighborhood $\mathcal{U} = \mathcal{U}_{p}$ of side length $\le \epsilon/2$. For simplicity, we will describe the situation where $p$ is not a singularity point. Then $\mathcal{U}_{p}$ is the image of an isometric embedding $i : (-\epsilon_{p}, \epsilon_{p}) \times  (-\epsilon_{p}, \epsilon_{p}) \rightarrow S$ for some $0<\epsilon_{p}\le \epsilon/2$. Let $\mathcal{V}_{p}$ be another bifoliated standard neighborhood of $p$ with side length $\epsilon_{p}$, i.e., $\mathcal{V}_{p}:= i( (-\epsilon_{p}/2, \epsilon_{p}/2)\times (-\epsilon_{p}/2, \epsilon_{p}/2))$. We now define \[\begin{aligned}
H^{(1)} (\mathcal{U}) := i \big( (-\epsilon_{p}, \epsilon_{p}) \times (-\epsilon_{p}, -\epsilon_{p}/2) \big), \quad 
H^{(2)} (\mathcal{U}) := i \big((-\epsilon_{p}, \epsilon_{p}) \times (\epsilon_{p}/2, \epsilon_{p}) \big), \\
V^{(1)} (\mathcal{U}) := i \big( (-\epsilon_{p}, -\epsilon_{p}/2) \times (-\epsilon_{p}, \epsilon_{p})  \big), \quad 
V^{(2)} (\mathcal{U}) := i \big((\epsilon_{p}/2, \epsilon_{p}) \times (-\epsilon_{p}, \epsilon_{p})   \big).
\end{aligned}
\]
We call the vertical segment $i\big(\{0\} \times [-7\epsilon_{p}/8, -5\epsilon_{p}/8]\big)$ the \emph{witness} for $H^{(1)}(\mathcal{U})$. We similarly define the witnesses for the other horizontal/vertical strips as the centered vertical/horizontal geodesic segment of length $\epsilon_{p}/4$.

The union of $H^{(1)}(\mathcal{U})$, $H^{(2)}(\mathcal{U})$, $V^{(1)}(\mathcal{U})$ and $V^{(2)}(\mathcal{U})$ is equal to $\mathcal{U} \setminus \bar{\mathcal{V}}$. Their interlaced pattern implies the following. Let $\gamma_{1}, \gamma_{2}, \eta_{1}, \eta_{2}$ be mutually quasi-transverse paths such that:
\begin{enumerate}
\item $\gamma_{1}$ and $\gamma_{2}$  traverse $H^{(1)}(\mathcal{U})$ and $H^{(2)}(\mathcal{U})$ horizontally, respectively, and 
\item  $\eta_{1}$ and $\eta_{2}$ traverse $V^{(1)} (\mathcal{U})$ and $V^{(2)}(\mathcal{U})$ vertically, respectively. 
\end{enumerate}
In particular, $\gamma_{1}, \ldots, \eta_{2}$ do not intersect with $\mathcal{V}$. In this case, the connected component $R$ of $\mathcal{V}$ in  $S \setminus (\gamma_{1} \cup \gamma_{2} \cup \eta_{1} \cup \eta_{2})$ is contained in $\mathcal{U}$. For instance, any path in $\mathcal{U}$ from $p$ to the bottom side of $H^{(1)}(\mathcal{U})$ must intersect $\gamma_{1}$ by the Jordan curve theorem. For a similar reason, any path in $\mathcal{U}$ from $p$ to $\partial \mathcal{U}$ should intersect $\gamma_{1} \cup \ldots \cup \eta_{2}$ and cannot lie in $R$.

Furthermore, among the two sides of $\gamma_{1}$, only one side is facing $p$. More precisely, there exists one side of $\gamma_{1}$ such that no path in $\mathcal{U}$ connects $p$ to that side without crossing $\gamma_{1}$. In other words, $R$ cannot approach a point on $\gamma_{1}$ from two sides, and $\partial R$ is embedded. Finally, recall that $\gamma_{1} \cup \gamma_{2}$ and $\eta_{1} \cup \eta_{2}$ have finitely many intersections. It is now clear that $R$ is a polygon and $\partial R$ is a simple closed curve on $S$ that is a  concatenation of finitely many subsegments of $\gamma_{1}, \ldots, \eta_{2}$.

When $p$ is a singularity point with multiplicity $k$, then $\mathcal{U} = \mathcal{U}_{p}$ is a bifoliated $2k$-gons; let $\epsilon_{p}$ be the half of the side length of $\mathcal{U}$. We can similarly scale down $\mathcal{U}$ by the factor of 2 to define  $\mathcal{V} = \mathcal{V}_{p}$. We analogously define the interlaced horizontal strips $H^{(1)}(\mathcal{U}), \ldots, H^{(k)}(\mathcal{U})$ and vertical strips $V^{(1)} (\mathcal{U}), \ldots, V^{(k)}(\mathcal{U})$, and their vertical/horizontal witnesses.

Since $S$ is compact, we can take a finite subcover $\mathcal{V}_{p_{1}}, \ldots, \mathcal{V}_{p_{T}}$ of $\{\mathcal{V}_{p} : p \in S\}$ for $S$, where $p_{1}, \ldots, p_{T} \in S$. Let \[
D := \sum_{l=1}^{T} \deg(p_{l}), \quad \epsilon_{min} := \min_{l=1, \ldots, T} \epsilon_{p_{l}} > 0.
\]
 For convenience, we write $\mathcal{V}_{p_{l}}$ as $\mathcal{V}_{l}$ and $\mathcal{U}_{p_{l}}$ as $\mathcal{U}_{l}$. 
 Let us also define \[
 \begin{aligned}
\mathcal{C}_{H} &:= \big\{ \textrm{horizontal strips associated to $\mathcal{U}_{1}, \ldots, \mathcal{U}_{T}$}\big\} \\
&= \big\{ \mathcal{H}^{(i)}(\mathcal{U}_{p_{l}}) : l=1,\ldots, T, i = 1, \ldots, (\textrm{multiplicity of $p_{l}$}) \big\}.
\end{aligned}
\]
For convenience, we will also enumerate $\mathcal{C}_{H}$ as \[
\mathcal{C}_{H} := \{ H_{1}, H_{2}, \ldots, H_{D}\}.
\]
Similarly we define the collection $\mathcal{C}_{V} = \{V_{1}, \ldots, V_{D}\}$ of the vertical strips associated to $\mathcal{U}_{1}, \ldots, \mathcal{U}_{T}$.

Since $\epsilon_{min} \le \epsilon_{p_{l}} < \epsilon/2$ for $l=1, \ldots, T$, the following is straightforward.
\begin{obs}
\label{obs:horizontalPass}Fix a horizontal strip $H_{l} \in \mathcal{C}_{H}$ and let $\eta_{l}$ be its witness. Let $\Gamma$ be a $q$-horizontal geodesic, let $\Gamma'$ be the subsegment of $\Gamma$ after truncating $\epsilon$-long terminal subsegments from the both side, and let $\alpha$ be a path that is $0.1\epsilon_{min}$-fellow traveling with $\Gamma$. Suppose that  $\Gamma'$ intersects $\eta_{l}$. Then $\alpha$ traverses $H_{l}$ horizontally.
\end{obs}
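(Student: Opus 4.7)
The plan is a short fellow-traveling argument inside the local flat chart around $p_l$. First I would pick a point $p \in \Gamma' \cap \eta_l$, parametrize $\Gamma$ by arclength so that $\Gamma(t_0) = p$, and choose the isometric chart $i : (-\epsilon_{p_l}, \epsilon_{p_l})^2 \to \mathcal{U}_{p_l}$ so that $H_l = i((-\epsilon_{p_l}, \epsilon_{p_l}) \times (-\epsilon_{p_l}, -\epsilon_{p_l}/2))$, $\eta_l = i(\{0\} \times [-7\epsilon_{p_l}/8, -5\epsilon_{p_l}/8])$, and $\Gamma(t_0 + s) = i(s, y_0)$ for $s \in [-\epsilon_{p_l}, \epsilon_{p_l}]$, with $y_0 \in [-7\epsilon_{p_l}/8, -5\epsilon_{p_l}/8]$. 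Because $p \in \Gamma'$ and $\Gamma$ extends $\Gamma'$ by length $\epsilon > 2\epsilon_{p_l}$ on each side, the horizontal geodesic $\Gamma$ is defined on all of $[t_0 - \epsilon_{p_l} - 0.2\epsilon_{min}, t_0 + \epsilon_{p_l} + 0.2\epsilon_{min}]$; moreover, since $y_0$ is bounded away from the horizontal sides of $\mathcal{U}_{p_l}$ by $\geq \epsilon_{p_l}/8$, the chart $i$ extends isometrically past both vertical sides of $\mathcal{U}_{p_l}$ to a flat neighborhood containing the endpoints $\Gamma(t_0 \pm (\epsilon_{p_l} + 0.2\epsilon_{min}))$.

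Next I would exploit the key inequality $0.1\epsilon_{min} \le 0.1\epsilon_{p_l} < \epsilon_{p_l}/8$ to derive three facts about $\alpha$. At $t_0$, the point $\alpha(t_0)$ lies within $0.1\epsilon_{min}$ of $i(0, y_0)$, which has $q$-distance $\geq \epsilon_{p_l}/8$ from $\partial H_l$, so $\alpha(t_0) \in \inte H_l$. For every $t \in [t_0 - \epsilon_{p_l} - 0.2\epsilon_{min}, t_0 + \epsilon_{p_l} + 0.2\epsilon_{min}]$, the $y$-coordinate of $\alpha(t)$ lies within $0.1\epsilon_{min}$ of $y_0$, hence strictly inside $(-\epsilon_{p_l}, -\epsilon_{p_l}/2)$. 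And at $t_\pm := t_0 \pm (\epsilon_{p_l} + 0.2\epsilon_{min})$, the $x$-coordinate of $\alpha(t_\pm)$ differs from $\pm(\epsilon_{p_l} + 0.2\epsilon_{min})$ by at most $0.1\epsilon_{min}$, and so is strictly outside $[-\epsilon_{p_l}, \epsilon_{p_l}]$; thus $\alpha(t_\pm) \notin H_l$.

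By continuity there exist $t_-^* \in (t_-, t_0)$ and $t_+^* \in (t_0, t_+)$ at which $\alpha$ first exits $\inte H_l$. The $y$-control forces $\alpha(t_\pm^*)$ to lie on a vertical side rather than on the top or bottom of $H_l$, and the $x$-coordinate control forces the two exit points onto opposite vertical sides. The subarc $\alpha|_{[t_-^*, t_+^*]}$ therefore connects the two vertical sides of $H_l$ while lying in $\inte H_l$, which is exactly a horizontal traversal of $H_l$. The only delicate point is the validity of extending the flat chart past the vertical sides of $\mathcal{U}_{p_l}$ by at least $0.3\epsilon_{min}$, but this is automatic because $y_0$ avoids both corners of $\mathcal{U}_{p_l}$ by $\geq \epsilon_{p_l}/8$, so the exit points $i(\pm\epsilon_{p_l}, y_0)$ lie in the regular part of the BSF structure and admit an isometric flat neighborhood into which the fellow-traveler $\alpha$ is guaranteed to stay by the $0.1\epsilon_{min}$ bound.
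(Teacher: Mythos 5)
The paper itself gives no proof of this observation; the line preceding it declares it ``straightforward,'' so there is nothing to compare your proof against. Your approach is the natural one and captures the right idea: localize in the flat chart at $p_l$, use $0.1\epsilon_{\min} < \epsilon_{p_l}/8$ to keep $\alpha$'s $y$-coordinate strictly between the horizontal sides of $H_l$, and use the fact that $\Gamma$ runs the full width of $\mathcal{U}_{p_l}$ (since it extends at least $\epsilon > 2\epsilon_{p_l}$ past $p\in\eta_l$ on each side) to force any exit of $\alpha$ from $H_l$ through a vertical side.

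Two points should be tightened, though. First, you write $d(\alpha(t),\Gamma(t))<0.1\epsilon_{\min}$ as if $\alpha$ comes with a parametrization synchronized to $\Gamma$; but where the observation is actually invoked in the proof of Theorem~\ref{thm:mainVar}, the path $\beta$ playing the role of $\alpha$ is only known to lie at Hausdorff distance at most $\epsilon_{\min}/100$ from the vertical segment $\Lambda$. You should either state which notion of fellow-traveling you are assuming, or run the argument directly from the Hausdorff bound (showing that the component of $\alpha\cap\mathcal{U}_{p_l}$ through the point near $p$ lies in the band $\{|y-y_0|<0.1\epsilon_{\min}\}$ and must meet both vertical sides). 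Second, the justification for extending the chart past the vertical sides of $\mathcal{U}_{p_l}$ is wrong as stated: a singularity of $q$ can sit anywhere on $\partial\mathcal{U}_{p_l}$, not only at the corners, so ``$y_0$ avoids the corners'' does not guarantee regularity of the exit points. Fortunately the extension is not needed: the candidate exit points $\alpha(t^*_\pm)$ lie on $\partial H_l\subset\overline{\mathcal{U}_{p_l}}$, where the $y$-coordinate is already defined by the original chart; what you actually need is that $\alpha$ eventually leaves $H_l$ on each side of $t_0$, which follows because $\alpha$ must approach points of $\Gamma$ at arc-length $\ge\epsilon$ from $p$, lying well outside $\mathcal{U}_{p_l}$. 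Finally, you implicitly treat only the square-chart case; a sentence on the bifoliated $2k$-gon case (when $p_l$ is a singularity) would complete the argument.
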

An analogous statement holds for vertical strips in $\mathcal{C}_{V}$ as well.

Before proceeding to the proof, note that we can choose our favorite basepoint $x_{0} \in \C*(S)$, up to increasing $L$. In particular, we may first pick a non-nullhomotopic simple closed curve $x_{0}' \in \C*(S)$ and approximate it with another simple closed curve $x_{0}$ that has piecewise constant $q$-slope that is never vertical nor horizontal. (It is not hard to realize $d_{\C*}(x_{0}', x_{0}) = 1$.) Note that if a $q$-geodesic segment $\gamma$ has $q$-slope $\alpha$, then $\varphi(\gamma)$ has $q$-slope $\lambda^{2} \cdot \alpha$, where $\lambda$ is the stretch factor of $\varphi$. Hence, given our choice of $x_{0}$, for sufficiently large $k$, $\varphi^{k}x_{0}$ and $x_{0}$ have locally distinct slopes away from finitely many ``kinks". In particular, $\varphi^{k} x_{0}$ and $x_{0}$ are quasi-transverse and each complementary region of $\varphi^{k} x_{0} \cup x_{0}$ has finitely many sides.

Let us now start the proof. For sufficiently large $N$ that will be quantified by the subsequent claims, let $g_{1}, g_{2}, \ldots$ be elements in $Stab_{L}(\varphi^{-N}x_{0}, \varphi^{N} x_{0})$. Our choice of $x_{0}$ guarantees that: \begin{claim}\label{claim:quasiTrans}
For each sufficiently large $k$, $x_{0}$ and $\varphi^{k} x_{0}$ are quasi-transverse.
\end{claim}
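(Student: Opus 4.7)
The plan is to exploit the pseudo-Anosov dynamics on $q$-slopes. In a BSF chart on $S \setminus P$, the map $\varphi$ is affine, stretching the horizontal foliation by $\lambda$ and contracting the vertical one by $\lambda^{-1}$ (or vice versa); hence any straight $q$-geodesic segment of slope $\alpha$ is sent by $\varphi^{k}$ to a straight segment of slope $\lambda^{\pm 2k}\alpha$. As $k$ grows, the finite set of $q$-slopes appearing in $\varphi^{k} x_{0}$ therefore separates from the fixed finite set of $q$-slopes of $x_{0}$, and this separation is what will force the two curves to intersect transversely in only finitely many points.

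Concretely, I would first record that by our choice $x_{0}$ is a concatenation of finitely many $q$-straight segments $\sigma_{1},\ldots,\sigma_{m}$ of constant slopes $\alpha_{1},\ldots,\alpha_{m}$, none of which is $0$ or $\infty$. Since each $\sigma_{i}$ crosses the finite set $P$ only finitely many times, its image $\varphi^{k}(\sigma_{i})$ is itself a finite concatenation of straight pieces, each of slope $\lambda^{\pm 2k}\alpha_{i}$, so $\varphi^{k}x_{0}$ is a finite concatenation of $q$-straight segments whose slopes all lie in $\{\lambda^{\pm 2k}\alpha_{i}\}_{i=1}^{m}$. For $k$ large enough, this set is disjoint from $\{\alpha_{j}\}_{j=1}^{m}$, and every pair of a piece of $x_{0}$ and a piece of $\varphi^{k} x_{0}$ has genuinely distinct constant slopes.

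Next I would argue that two $q$-straight segments of distinct constant slopes meet in an isolated set of points: in any non-singular flat chart they meet in at most one point, and a crossing at a cone point of $q$ is again transverse with an isolated local contribution. By compactness of $S$, each pair of pieces contributes only finitely many intersections, and summing over the $O(m^{2})$ pairs yields finitely many intersections of $x_{0}$ with $\varphi^{k}x_{0}$ in total, which is the desired quasi-transversality by Definition \ref{dfn:quasi}.

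The main subtlety will be ensuring that no sequence of intersection points accumulates along a pair of differently-sloped pieces, or equivalently, that two straight pieces with different slopes cannot coincide on a positive-length arc. This is precisely where the careful choice of $x_{0}$ with slopes bounded away from $0$ and $\infty$ is essential: otherwise a piece of $x_{0}$ could run along a leaf of $\mathcal{F}_{h}$ or $\mathcal{F}_{v}$ and could overlap an image segment (which, after many iterations, becomes nearly horizontal or nearly vertical) on an entire arc, producing infinite intersection.
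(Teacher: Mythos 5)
Your argument is correct and is exactly the paper's (very terse) argument: $\varphi$ scales $q$-slopes by $\lambda^{2}$, so if $x_0$ is chosen to have piecewise constant slopes that are never horizontal or vertical, then for $k$ large the finite slope set of $\varphi^{k}x_{0}$ is disjoint from that of $x_{0}$, and transversality plus compactness gives finitely many intersections. One small remark: your closing paragraph suggests the constraint on $x_0$'s slopes is what makes intersections isolated, whereas isolation is automatic once slopes genuinely differ (which you already established); the slope constraint on $x_0$ is actually needed so that the slopes of $\varphi^{k}x_{0}$, which only converge to but never equal the horizontal/vertical directions, eventually escape the slope set of $x_0$.
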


Our next claim is: \begin{claim}\label{claim:horizontalPass}
For each sufficiently large $N$, for each $i$, $g_{i} \varphi^{N} x_{0}$ traverses each $V_{l} \in \mathcal{C}_{V}$ vertically.
\end{claim}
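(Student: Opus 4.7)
The plan is to combine the small width lemma (Theorem \ref{thm:small}), the target lemma (Theorem \ref{thm:target}), the escape lemma (Theorem \ref{thm:escape}), and the vertical-strip analogue of Observation \ref{obs:horizontalPass}. The geometric picture is that $\varphi$ is loxodromic on $\C*(S)$ with forward attractor the (unstable) vertical foliation $\mathcal{F}_{v}$, so for large $N$ the curve $\varphi^{N} x_{0}$ is deep in the $\mathcal{F}_{v}$-direction in the fine curve graph; the joint $L$-stabilizer condition passes this property on to $g_{i} \varphi^{N} x_{0}$. The small width lemma then forces long subsegments of $g_{i} \varphi^{N} x_{0}$ to fellow-travel vertical leaves within $0.1\,\epsilon_{min}$, and the target lemma guarantees that any such fellow-traveled leaf is long enough to hit the witness $\eta_{l}$ of every $V_{l} \in \mathcal{C}_{V}$; the observation then promotes each witness-hitting event into a vertical traversal of $V_{l}$.

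\textbf{Implementation.} I would fix $\epsilon_{small} := 0.1\,\epsilon_{min}$, apply the target lemma to the witnesses $\eta_{l}$ (which uniformly satisfy total width $\ge \epsilon_{min}/4$ and bounded size) to obtain a single $L_{target}$, set $B_{small} := L_{target} + 2\epsilon$, and feed $(\epsilon_{small}, B_{small})$ into the small width lemma to obtain a Gromov-product threshold $K_{small}$; then feed $L_{escape} := B_{small}$ into the escape lemma to obtain $R_{escape}$. Since $\varphi^{N} x_{0}$ converges to $\mathcal{F}_{v}$ in $\C*(S) \cup \partial \C*(S)$, for $N$ sufficiently large both $(\varphi^{N} x_{0}, \mathcal{F}_{v})_{x_{0}} > K_{small} + L$ and $d_{\C*}(\varphi^{N} x_{0}, x_{0}) > R_{escape} + L$ hold simultaneously. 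For any $g_{i}$ in the joint $L$-stabilizer, the standard Gromov-product inequality then keeps $(g_{i} \varphi^{N} x_{0}, \mathcal{F}_{v})_{x_{0}}$ above $K_{small}$, while the escape lemma's contrapositive forces the $q$-size of $g_{i} \varphi^{N} x_{0}$ above $B_{small}$. A subsegment $\alpha \subseteq g_{i} \varphi^{N} x_{0}$ of $q$-size $B_{small}$ then $\epsilon_{small}$-fellow travels a vertical leaf $\Gamma$ of length $\ge B_{small}$; its $\epsilon$-truncation $\Gamma'$ has length $\ge L_{target}$ and so meets each $\eta_{l}$ by the target lemma, and the vertical analogue of Observation \ref{obs:horizontalPass} concludes that $\alpha$, and hence $g_{i} \varphi^{N} x_{0}$, traverses each $V_{l}$ vertically.

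\textbf{Expected obstacle.} The only delicate point is producing a subsegment of $q$-size $B_{small}$ inside $g_{i} \varphi^{N} x_{0}$: since $g_{i}$ is controlled only in the coarse combinatorial metric of $\C*(S)$, nothing a priori prevents it from drastically compressing $\varphi^{N} x_{0}$ to a $q$-short simple closed curve, which could not even reach across a single vertical strip. The escape lemma is exactly the tool that closes this gap, because $q$-short simple closed curves must stay $\C*$-close to $x_{0}$; thus the growing $\C*$-distance $d_{\C*}(\varphi^{N} x_{0}, x_{0})$ converts into a uniform lower bound on the $q$-size of $g_{i} \varphi^{N} x_{0}$, independently of how wild $g_{i}$ is as a homeomorphism.
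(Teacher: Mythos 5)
Your proposal is correct and follows essentially the same route as the paper: fix the target lemma constant from the witnesses, feed it into the small width lemma and the escape lemma, use the Gromov--product stability under $L$-stabilizers (plus the fact that a large Gromov product forces large $\C*$-distance) to get a lower bound on the $q$-size of $g_i\varphi^N x_0$, and then combine the small width lemma, the target lemma, and the vertical analogue of Observation~\ref{obs:horizontalPass}. The only slip is a minor bookkeeping one: the vertical leaf that a size-$B_{small}$ subsegment fellow-travels has length at least $B_{small}$ \emph{minus} (twice) the width, not $B_{small}$ itself, so your $B_{small} = L_{target}+2\epsilon$ is a hair too small once you also $\epsilon$-truncate; taking $B_{small} = L_{target}+3\epsilon$ as in the paper closes that gap.
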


To show this, let $L'=L_{target}$ be as in Theorem \ref{thm:target} for $B_{target} = \epsilon_{target} := \epsilon_{min}/10$. Let $R'=R_{escape}$ be as in Theorem \ref{thm:escape} for $L_{escape} := L'+3\epsilon$.
Let $K'=K_{escape}$ be  as in  Theorem \ref{thm:small} for $B_{small} := L'+ 3\epsilon$ and $\epsilon_{small} := \epsilon/100$. Finally, let $N$ be large enough such that  such that \begin{equation}\label{eqn:NLarge}
\begin{aligned}
\big( \varphi^{N} x_{0}, \mathcal{F}_{v}\big)_{x_{0}} &> L+L' + R'+ K'+ 100\delta_{\C*},\\
\big(\varphi^{-N} x_{0}, \mathcal{F}_{h}\big)_{x_{0}} &> L+L' + R'+ K'+ 100\delta_{\C*}.
\end{aligned}
\end{equation}
(Recall that $\mathcal{F}_{v}$ and $\mathcal{F}_{h}$ are the attractor/repeller for the action of $\varphi$ on $\C*(S)$.) Here, $\lambda_{\varphi}^{\C*}$ is the $d_{\C*}$-translation length of $\varphi$ and $\delta_{\C*}$ is the Gromov hyperbolicity constant for $\C*(S)$.

Let $i \in \Z_{>0}$. Since $d_{\C*}(\varphi^{N} x_{0}, g_{i} \varphi^{N} x_{0}) \le L$, Inequality \ref{eqn:NLarge} implies that \[
\big(g_{i} \varphi^{N} x_{0}, \mathcal{F}_{v}\big)_{x_{0}} \ge \big(\varphi^{N} x_{0}, \mathcal{F}_{v} \big)_{x_{0}} - L \ge (L + L'+K'+R') - L \ge K'+R'.
\]In particular, we have $d_{\C*}(g_{i} \varphi^{N} x_{0}, x_{0}) > R'$. Theorem \ref{thm:escape} tells us that the $q$-size of $g_{i}\varphi^{N} x_{0}$ is at least $L' + 3\epsilon$. Take any $(L' + 3\epsilon)$-long subsegment $\beta$ of $g_{i} \varphi^{N} x_{0}$. Theorem \ref{thm:small} then tells us that $\beta$ has width at most $\epsilon_{min}/100$. Let $\Lambda$ be a vertical segment in $q$ that is within Hausdorff distance $\epsilon_{min}/100$ from $\beta$. Then the length of $\Lambda$ is at least the size of $\beta$ minus the width of $\beta$, hence at least $L' +2\epsilon$. Let $\Lambda'$ be the subsegment of $\Lambda$ after truncating $\epsilon$-long terminal subsegments. Then $\Lambda'$ has length $L'$. Theorem \ref{thm:target} implies that $\Lambda'$ meets the witness for $V_{l}$, which is a horizontal segment longer than  $\epsilon_{min}/4$. A vertical version of Observation \ref{obs:horizontalPass} now guarantees that $\beta$ traverses $V_{l}$ vertically. Hence, $g_{i} \varphi^{N} x_{0}$ traverses $V_{l}$ vertically.

A similar argument using the ending property of $\mathcal{F}_{h}$ implies that: \begin{claim}\label{claim:verticalPass}
For each sufficiently large $N$, for each $i$, $g_{i} \varphi^{-N} x_{0}$ traverses each $H_{l} \in \mathcal{C}_{H}$ horizontally.
\end{claim}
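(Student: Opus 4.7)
My plan is to run essentially the same argument as the one for Claim \ref{claim:horizontalPass}, with the roles of the horizontal and vertical foliations interchanged throughout. This interchange is legitimate because the BSF structure $q$ coming from a pseudo-Anosov has \emph{both} foliations ending (as recorded in Section 2), and the Target, Escape, and Small Width Lemmas, though stated with a distinguished vertical foliation, are insensitive to which of the two orthogonal foliations of the BSF structure is labeled ``vertical.'' Crucially, Inequality \ref{eqn:NLarge} has already been set up so that both $(\varphi^{N} x_0, \mathcal{F}_v)_{x_0}$ and $(\varphi^{-N} x_0, \mathcal{F}_h)_{x_0}$ are large; I will feed the second line into the dual argument.

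Concretely, since $g_i$ $L$-stabilizes $\varphi^{-N} x_0$, the Gromov product based at $x_0$ changes by at most $L$ under translation by $g_i$, giving $(g_i \varphi^{-N} x_0, \mathcal{F}_h)_{x_0} \geq K' + R'$. In particular $d_{\C*}(g_i \varphi^{-N} x_0, x_0) > R'$, so the Escape Lemma forces the $q$-size of $g_i \varphi^{-N} x_0$ to be at least $L' + 3\epsilon$. I then take an arbitrary subsegment $\beta$ of $g_i \varphi^{-N} x_0$ of length $L' + 3\epsilon$ and apply the Small Width Lemma to the ending foliation $\mathcal{F}_h$. This gives that $\beta$ has vertical width at most $\epsilon_{min}/100$ and $(\epsilon_{min}/100)$-fellow-travels a horizontal leaf of $\mathcal{F}_h$. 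Letting $\Gamma$ denote this approximating horizontal geodesic, $\Gamma$ has length at least $L' + 2\epsilon$, and truncating $\epsilon$-long terminal subsegments yields $\Gamma'$ of length $L'$.

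Applying the horizontal version of the Target Lemma to $\Gamma'$ and to the witness of $H_l$ --- which is a vertical segment of length $> \epsilon_{min}/4$ --- then forces $\Gamma'$ to intersect this witness. A horizontal analog of Observation \ref{obs:horizontalPass} guarantees that $\beta$ traverses $H_l$ horizontally, and since $\beta$ was an arbitrary subsegment of the required length, so does the entire curve $g_i \varphi^{-N} x_0$. The only obstacle I foresee is the bookkeeping of these ``symmetric'' versions of the three lemmas: because $q$ has both foliations ending, the hypotheses match symmetrically, and no input in the original proofs from \cite{bowden2024towards} breaks the symmetry, so nothing more than a careful re-reading should be required.
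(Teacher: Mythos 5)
Your proposal is correct and is exactly the argument the paper intends: the paper dispatches Claim~\ref{claim:verticalPass} with the single sentence ``A similar argument using the ending property of $\mathcal{F}_{h}$ implies that\ldots,'' and you have spelled out precisely that dualization, feeding the second line of Inequality~\ref{eqn:NLarge} through the horizontal versions of the Escape, Small Width, and Target lemmas. One small slip: you do not need a ``horizontal analog'' of Observation~\ref{obs:horizontalPass} at the end --- that observation as literally stated already concerns horizontal strips, horizontal geodesics, and horizontal traversal, so it applies verbatim here (it was Claim~\ref{claim:horizontalPass}, about vertical strips $V_{l}$, that required the \emph{vertical} analog).
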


Assuming that $N$ is large enough for Claim \ref{claim:quasiTrans}, \ref{claim:horizontalPass} and \ref{claim:verticalPass}, let $\mathcal{A}$ ($\mathcal{B}$, resp.) be the collection of subsegments of $\varphi^{-N} x_{0}$ ($\varphi^{N} x_{0}$, resp.) whose endpoints lie in $\varphi^{-N} x_{0} \cap \varphi^{N} x_{0}$. Since $\varphi^{-N}x_{0}$ and $\varphi^{N} x_{0}$ are quasi-transverse, $\mathcal{A}$ and $\mathcal{B}$ are finite sets.

Let us now pick $i \in \Z_{>0}$ and $l \in \{1, \ldots, T\}$. Claim \ref{claim:horizontalPass} and \ref{claim:verticalPass} tells us that $g_{i} \varphi^{N} x_{0}$ vertically traverses the vertical strips associated to $\mathcal{U}_{l}$ and $g_{i} \varphi^{-N} x_{0}$ horizontally traverses the horizontal strips associated to $\mathcal{U}_{l}$. Hence, there is an open region $R_{i, l}$ sandwiched between $\mathcal{V}_{l}$ and $\mathcal{U}_{l}$ such that $\partial R_{i, l}$ is a concatenation of subsegments of $g_{i} \varphi^{N} x_{0}$ and $g_{i} \varphi^{-N} x_{0}$.  For concreteness, let $\gamma_{1}, \ldots, \gamma_{N(i, l)}$ be the (distinct) subsegments of $g_{i} \varphi^{-N} x_{0}$ and $\eta_{1}, \ldots, \eta_{N(i, l)}$ be the (distinct) subsegments of $g_{i} \varphi^{N} x_{0}$ that together comprise $\partial R_{i, l}$. Then $\{g_{i}^{-1} \gamma_{1}, g_{i}^{-1} \eta_{1}, \ldots, g_{i}^{-1} \gamma_{N(i, l)}, g_{i}^{-1} \eta_{N(i, l)}\}$ are consecutive arcs on $S$ that form a boundary of an immersed disc. This gives a map \[
F_{l} : i \mapsto \big(g_{i}^{-1}\{ \gamma_{1}, \gamma_{2}, \ldots\}, g_{i}^{-1}\{\eta_{1}, \eta_{2}, \ldots\}\big) \in 2^{\mathcal{A}} \times 2^{\mathcal{B}}.
\]

Since $2^{\mathcal{A}} \times 2^{\mathcal{B}}$ and $\{1, \ldots, T\}$ are finite sets, there exist distinct integers $i \neq j$ with the same values of  $(F_{1}, \ldots, F_{T})$. We now claim that $g_{i} g_{j}^{-1}$ is an $\epsilon$-coarse identity. To show this, let $p \in S$ be an arbitrary point. Then $p \in \mathcal{V}_{l} \subseteq R_{j, l}$ for some $l \in \{1, \ldots, T\}$. Now $g_{j}^{-1}$ sends $\partial R_{j, l}$ to the union of the arcs in $F_{l}(j) = F_{l}(i)$. This is now mapped by $g_{i}$ to $\partial R_{i, l}$, the boundary of $R_{i, l}$. By Jordan curve theorem, $g_{i} g_{j}^{-1}$ then sends $R_{j, l}$ onto a component of $S \setminus \partial R_{i, l}$. Since the surface $S$ has genus at least 1, no two distinct embedded disc can share the boundary. Hence, we have  $g_{i} g_{j}^{-1}(R_{j, l}) = R_{i, l}$ and $g_{i} g_{j}^{-1}(p) \in R_{i, l} \subseteq \mathcal{U}_{l}$. Since $p \in \mathcal{V}_{l} \subseteq \mathcal{U}_{l}$ as well, we have \[
d(p, g_{i} g_{j}^{-1} p) \le \diam(\mathcal{U}_{l}) \le \epsilon.
\]
Since $p \in S$ is arbitrary, we conclude that $g_{i} g_{j}^{-1}$ is an $\epsilon$-coarse identity. This ends the proof.
\end{proof}

Homeomorphisms of $S$ that are sufficiently close to the identity map in the $C^{0}$-topology are isotopic to the identity map. This fact and Theorem \ref{thm:main} imply the following:

\begin{cor}\label{cor:mainHomotopy}
Let $S$ and $\varphi$ be as in Theorem \ref{thm:main}, and let $x_{0} \in \C*(S)$. Then for each $L>0$, there exists $N$ such that the following holds.

Let $k \in \Z$ and let $g_{1}, g_{2}, \ldots, g_{N+1} \in \Homeo(S)$ be elements of $\Homeo(S)$ that $L$-stabilize $\varphi^{k} x_{0}$ and $\varphi^{k+N} x_{0}$. Then there exists $i \neq j$ such that $g_{i} g_{j}^{-1}$ is isotopic to the identity map on $S$. 
\end{cor}

\section{Application}

Let $X$ be a metric space. We say that two geodesics $\gamma, \eta : [0, L] \rightarrow X$ are \emph{$K$-synchronized} if $d(\gamma(t), \eta(t)) <K$ for each $t$. On a path $\gamma : [0, L] \rightarrow X$, we say that points $\gamma(t_{1}), \ldots, \gamma(t_{N})$ are in order from closest to farthest from $\gamma(0)$, or just \emph{in order} for short, if $0 \le t_{1} \le \ldots \le t_{N}$.

\begin{fact}\label{fact:quadri}
Let $X$ be a geodesic $\delta$-hyperbolic space. Let $K>0$. Let $x, y, x', y' \in X$ be such that $d(x, x') < K$, $d(y, y') < K$ and $d(x, y) >10(K+\delta)$. Then there exist $x_{1}, y_{1} \in [x, y]$ and $x_{2}, y_{2} \in [x', y']$, in order, such that \[
\max\{ d(x, x_{1}), d(y, y_{1}), d(x', x_{2}), d(y', y_{2})\} < K, \quad \max \{d(x_{1}, x_{2}), d(y_{1}, y_{2}) \} < 10\delta.
\]
Furthermore, $x_{1}, x_{2}, y_{1}, y_{2}$ can be chosen such that the geodesics $[x_{1}, y_{1}]$ and $[x_{2}, y_{2}]$ are $10\delta$-synchronized.
\end{fact}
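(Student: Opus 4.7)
The plan is to use the standard \emph{thin quadrilateral} property of $\delta$-hyperbolic spaces: splitting a geodesic quadrilateral along a diagonal and applying the $\delta$-thin triangle condition to each of the two resulting triangles shows that every side lies in the $2\delta$-neighborhood of the union of the other three. Applied to the quadrilateral with vertices $x, x', y', y$ in cyclic order, this, combined with the hypotheses $d(x, x'), d(y, y') < K$ and $d(x, y) > 10(K + \delta)$, implies that the ``middle'' of the two long sides $[x, y]$ and $[x', y']$ remain within $2\delta$ of each other, while the ``end portions'' (within $K$ of the short sides) can deviate by up to $K$.

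To locate the four points, I would use the tripod approximation. Form the Gromov product $\alpha := (x' | y)_x = \tfrac{1}{2}(d(x, x') + d(x, y) - d(x', y))$; the triangle inequality gives $\alpha \le d(x, x') < K$. By the tripod approximation of the $\delta$-thin triangle $(x, x', y)$, walking distance $\alpha$ from $x$ along $[x, y]$ yields a point $x_{1}$ with $d(x, x_{1}) < K$ that is within $4\delta$ of a point $q_{1} \in [x, x']$. A second application of the tripod approximation to the triangle $(x, x', y')$ produces a point $x_{2} \in [x', y']$ with $d(x', x_{2}) < K$ that lies within $4\delta$ of a point on $[x, x']$ near $q_{1}$; the triangle inequality then yields $d(x_{1}, x_{2}) < 10\delta$. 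A symmetric construction near the $y$-end produces $y_{1}, y_{2}$. The ``in order'' condition is automatic because $d(x, y) > 10(K + \delta)$ forces the two chosen pairs of points to lie in disjoint subsegments of each long side.

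For the final synchronization statement, once $d(x_{1}, x_{2}), d(y_{1}, y_{2}) < 10\delta$, the subsegments $[x_{1}, y_{1}]$ and $[x_{2}, y_{2}]$ have lengths differing by less than $20\delta$. Applying the thin triangle property to the two triangles $(x_{1}, y_{1}, y_{2})$ and $(x_{1}, x_{2}, y_{2})$ shows that corresponding interior points of $[x_{1}, y_{1}]$ and $[x_{2}, y_{2}]$ remain within $10\delta$; one then nudges an endpoint by at most $20\delta$ along the ambient geodesic to equalize the lengths and obtain $10\delta$-synchronized geodesics on a common parameter interval $[0, L]$. The main obstacle is the careful Gromov product bookkeeping needed to simultaneously enforce $d(x, x_{1}), d(x', x_{2}) < K$ (keeping the points near the endpoints) and $d(x_{1}, x_{2}) < 10\delta$ (the sharp hyperbolic thinness bound); these two requirements pull in opposite directions and make the tripod construction above essentially forced.
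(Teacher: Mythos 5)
The paper states this \emph{Fact} without proof, treating it as standard hyperbolic geometry, so there is no in-paper argument to compare against. Your strategy — Gromov products, the tripod/thin-triangle approximation, and the thin-quadrilateral estimate — is indeed the natural route, and the first half of your argument is essentially sound once one fills in a suppressed step: to get $d(x_1, x_2) < 10\delta$ you need not only that $x_1$ (resp.\ $x_2$) lies near the internal point $q_1 \in [x,x']$ (resp.\ $q_2 \in [x',x]$) of the corresponding triangle, but also that $q_1$ and $q_2$ are themselves close, which amounts to comparing $(x'|y)_x + (x|y')_{x'}$ with $d(x,x')$ and requires the four-point inequality (the ``quadrilateral'' half of hyperbolicity) rather than a single thin triangle. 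That estimate is routine and the factor of $10$ absorbs the bookkeeping, but as written your proposal silently assumes it.

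The genuine gap is in your synchronization step. The paper's definition requires both geodesics to be parametrized on a common interval $[0,L]$, so $d(x_1,y_1) = d(x_2,y_2)$ must hold exactly. Your fix is to ``nudge an endpoint by at most $20\delta$ along the ambient geodesic to equalize the lengths,'' but nudging $x_2$ inward along $[x',y']$ increases $d(x', x_2)$ by the nudge amount, which can exceed the required bound $d(x',x_2) < K$ once $K \lesssim 20\delta$; nudging $y_1$ outward along $[x,y]$ can similarly push $y_1$ past $y$. The cleaner route is to choose the four points so that the two inner lengths agree from the outset: for instance, fix $x_1, x_2$ as above, observe that as $t$ ranges over a small interval you can continuously choose paired points $y_1(t) \in [x,y]$ and $y_2(t) \in [x',y']$ near the $y$-end satisfying the $K$- and $10\delta$-bounds, and then apply an intermediate-value argument to $t \mapsto d(x_1, y_1(t)) - d(x_2, y_2(t))$; once the lengths match, the fellow-traveling bound falls out of a second application of the thin-quadrilateral estimate with no a posteriori repair.
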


\begin{dfn}\label{dfn:indepdent}
For two sequences $(g_{i})_{i \in \Z}, (h_{i})_{i \in \Z}$ in $\Homeo(S)$, we write $(g_{i})_{i \in \Z} \sim (h_{i})_{i \in \Z}$ if for some (equivalently, any) $x_{0} \in \C*(S)$ there exists $K>0$ such that, for any $i < j$ there exist $k < m$ and $\psi \in \Homeo(S)$ such that  $d_{\C*} (g_{i} x_{0}, \psi h_{k} x_{0}) < K$ and $d_{\C*}(g_{j} x_{0}, \psi h_{m} x_{0}) < K$.

Let $g, h \in \Homeo(S)$ be loxodromic isometries of $\C*(S)$. We write $g \sim h$ if  $(g^{l})_{l \in \Z}\sim (h^{l})_{l \in \Z}$.
\end{dfn}

We will prove the following:

\begin{prop}\label{prop:quantNoQuasi}
Let $S$ be a surface of genus at least 1, let $P \subseteq S$ be a finite set and let $\varphi \in \Homeo(S)$ be a map that restricts to a pseudo-Anosov map on $S \setminus P$. Let $G \le \Homeo(S)$ be a group of homeomorphisms that descends to a non-elementary subgroup of $\Mod(S)$. Then there exists $g_{0} \in G$ that is independent from $\varphi$ (when seen with the action on $\C*(S)$) such that \[
(\ldots, \varphi^{-2}, \varphi^{-1}, id, g_{0}, g_{0}^{2}, \ldots) \not\sim (\varphi^{i})_{i \in \Z} \quad \textrm{and}\quad 
(\ldots, g_{0}^{-2}, g_{0}^{-1}, id, \varphi, \varphi^{2}, \ldots) \not\sim (\varphi^{i})_{i \in \Z}.
\]

If $S$ is a torus and $P$ is a singleton, then the above statement holds for $G \le \Homeo(S)$ that fixes $p$ and descends to a non-elementary subgroup of $\Mod(S \setminus \{p\})$.
\end{prop}

\begin{proof}
In this proof, let $X$ be $\mathcal{C}(S \setminus \{p\})$, the curve complex of the punctured surface, if $S = \mathbb{T}^{2}$ and $P = \{p\}$ is a singleton. Otherwise we let $\mathcal{C} = \mathcal{C}(S)$, the ambient (classical) curve complex. Let $\delta$ be the Gromov hyperbolicity constant for both $\C*(S)$ and $X$. We also fix a basepoint $x_{0} \in \C*(S)$ that does not pass through $P$ and denote its homotopy class by $\mathfrak{x}_{0} \in X$. 
Given $w \in \Homeo(S)$, we write \[
|w|_{\C*} := d_{\C*(S)}(x_{0}, wx_{0}), \quad |w|_X := d_{X} (\mathfrak{x}_{0}, w\mathfrak{x}_{0}).
\]
We also denote the $d_{X}$-translation length of $w$ by $l_{w}$. We then have \[
|w|_X \le |w|_{\C*}, \quad |w^{k}|_X \ge k l_{w} (\forall k \in \Z_{>0}).
\]

Let $G \le \Homeo(S)$ be as in the proposition. Since the image of $G$ in $\Mod(S)$ or $\Mod(S\setminus \{p\})$ is non-elementary, there exist two elements $g, h \in G$ whose mapping classes are independent loxodromics on $X$. Since the $d_{\C*(S)}$-distance is bounded from below by the $d_{X}$-distance (cf. \cite[Lemma 3.8]{bowden2022quasi}), this implies that $g$ and $h$ are independent loxodromics on $\C*(S)$ as well. By replacing $g$ and $h$ with suitable words made of letters $g$ and $h$, we may assume that $\{\varphi, g, h\}$ are independent loxodromics on $\C*(S)$. By powering up $g$, $h$ and $\varphi$ if necessary, we can guarantee the following: 

\begin{obs}\label{obs:ping}
There exist $C> 1000(\delta + 1)$ such that the following hold. \begin{enumerate}
\item $0.99C < l_{\varphi}, l_{g}, l_{h} < C$.
\item Let $w = a_{1} a_{2} \ldots a_{n}$ be a reduced word made of letters $\{g, g^{-1}, h, h^{-1}, \varphi, \varphi^{-1}\}$, i.e., $a_{i} \neq a_{i+1}^{-1}$ for each $i$. Then there exist points $p_{1}, \ldots, p_{n-1} \in [x_{0}, wx_{0}]$, in order from closest to farthest from $x_{0}$, such that $d_{\C*(S)}(p_{i}, a_{1} \cdots a_{i} x_{0}) < 0.01C$.
\end{enumerate}
\end{obs}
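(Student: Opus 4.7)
The plan is to implement a standard ping-pong argument in the Gromov hyperbolic space $\C*(S)$, leveraging the pairwise independence of the loxodromics $\varphi, g, h$, and then to replace each generator by a sufficiently high power so that the relative size of the ``turning'' at each partial product becomes negligible. Once this is arranged, the conclusion will follow by recognizing the broken geodesic path through the partial products $a_1 \cdots a_i x_0$ as a uniform quasi-geodesic and invoking stability of quasi-geodesics.

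The first step is a uniform turning bound. Since $\varphi, g, h$ are pairwise independent loxodromic isometries of $\C*(S)$, their six fixed points in $\partial \C*(S)$ are pairwise distinct. Upper semicontinuity of Gromov products at the boundary (up to $O(\delta_{\C*})$) then yields a constant $K_0 > 0$ such that for any $f_1, f_2 \in \{\varphi, g, h\}$ and signs $\epsilon_1, \epsilon_2 \in \{\pm 1\}$ with $(f_1, \epsilon_1) \neq (f_2, -\epsilon_2)$,
\[
\sup_{N, M \ge 1}\bigl(f_1^{-\epsilon_1 N} x_0,\, f_2^{\epsilon_2 M} x_0\bigr)_{x_0} \le K_0.
\]
Crucially, $K_0$ is bounded independently of the powers to which the generators are raised, since powering preserves the fixed points in $\partial \C*(S)$. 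Consequently, for any powers $N_\varphi, N_g, N_h$ and any $a, b \in \{\varphi^{\pm N_\varphi}, g^{\pm N_g}, h^{\pm N_h}\}$ with $a \neq b^{-1}$, one has $(a^{-1} x_0, b x_0)_{x_0} \le K_0$.

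Next, I fix a large target $C$ (larger than $1000(\delta_{\C*} + 1)$ and dominating a Hausdorff constant introduced below), set $N_f := \lfloor C / l_f \rfloor$ for each original generator $f$, and replace $\varphi, g, h$ by their $N_f$-th powers. For $C$ sufficiently large, the new translation lengths land in $(0.99 C, C)$, giving conclusion (1). For a reduced word $w = a_1 \cdots a_n$ in the new generators, consider the broken path $P$ with vertices $q_i := a_1 \cdots a_i x_0$ and geodesic edges of length close to $C$. At each interior vertex, translating by $(a_1 \cdots a_i)^{-1}$ yields $(q_{i-1}, q_{i+1})_{q_i} = (a_i^{-1} x_0, a_{i+1} x_0)_{x_0} \le K_0$ by the previous step---this is exactly where reducedness $a_i \neq a_{i+1}^{-1}$ is used. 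Since each edge of $P$ is much longer than $K_0 + \delta_{\C*}$, the standard local-to-global criterion for quasi-geodesics in $\delta$-hyperbolic spaces shows that $P$ is a $(\lambda, \mu)$-quasi-geodesic with constants depending only on $K_0$ and $\delta_{\C*}$. Stability of quasi-geodesics then provides a Hausdorff constant $H = H(K_0, \delta_{\C*})$, independent of $C$ and $w$, such that each $q_i$ admits a closest point $p_i$ on $[x_0, w x_0]$ with $d_{\C*}(p_i, q_i) \le H$, arranged in the correct order along the geodesic.

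The substantive inputs---independence yielding $K_0$, the local-to-global criterion, and stability of quasi-geodesics---are all classical for $\delta$-hyperbolic spaces, so the one remaining task is bookkeeping: calibrating a single $C$ that simultaneously satisfies $C > 1000(\delta_{\C*} + 1)$, $0.99 C < l_f < C$ after powering up, and $H < 0.01 C$. This is achievable precisely because $K_0$, and therefore $H$, is fixed while $C$ may be chosen as large as one pleases; this is the only nontrivial aspect of the calibration and the main (minor) obstacle.
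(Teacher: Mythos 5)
Your proof is correct, and it matches the approach the paper implicitly intends: the paper states Observation~\ref{obs:ping} without proof, merely indicating that one first replaces $g, h$ by commutators to achieve independence and then powers up the generators, and your ping-pong argument (uniform bound $K_0$ on Gromov products at corners via pairwise-distinct boundary fixed points, the local-to-global criterion for broken paths, and stability of quasi-geodesics giving a Hausdorff constant $H$ independent of the power $C$) is precisely the standard machinery that justifies this. The one small point worth tightening is the phrase ``upper semicontinuity of Gromov products at the boundary''---what you actually use is the basic fact that sequences converging to distinct boundary points have uniformly bounded Gromov products at any basepoint---but the substance of the step, and of the whole argument, is sound.
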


We now begin the proof. Let $\mathbf{W} := \{w_{1}, w_{2}, \ldots, \}$ be an infinite collection of words made by $g$ and $h$ such that no powers of $w_{i}$ and $w_{j}$ are equal for distinct $i$ and $j$. $(\ast)$ For example, $w_{i} := g^{i} h$ will do. This property is preserved under taking subsets. Further, for each  $i \neq j$, we have $d_{\C*(S)} (w_{i}^{k} x_{0}, w_{j}^{l} x_{0}) \gtrsim_{i, j} k+l$ for $k, l > 0$.

Let $N$ be as in Corollary \ref{cor:mainHomotopy} for $L = 10C$. We claim:

\begin{claim}\label{claim:singleHanded}
Among any $N+1$ distinct elements in $\mathbf{W}$, there exists $g_{0}$ such that $(\ldots, \varphi^{-2}, \varphi^{-1}, id, g_{0}, g_{0}^{2}, \ldots) \not\sim (\varphi^{l})_{l\in \Z}$. 
\end{claim}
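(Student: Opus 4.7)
The plan is to argue by contradiction and reduce to Corollary~\ref{cor:mainHomotopy}. Suppose that every $g_0$ in a fixed $(N+1)$-element subset $\{w_1, \ldots, w_{N+1}\} \subseteq \mathbf{W}$ satisfies $(\ldots, \varphi^{-1}, \mathrm{id}, g_0, g_0^2, \ldots) \sim (\varphi^l)_{l\in\Z}$; since there are only finitely many such equivalences, a single $K > 0$ works for all of them. For a large $M$ to be chosen, applying each equivalence at the index pair $(-M, M)$ produces, for every $a = 1, \ldots, N+1$, integers $k_a < m_a$ and $\psi_a \in \Homeo(S)$ satisfying
\[
d_{\C*}(\varphi^{-M} x_0, \psi_a \varphi^{k_a} x_0) < K, \qquad d_{\C*}(w_a^M x_0, \psi_a \varphi^{m_a} x_0) < K.
\]

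The geometric heart of the argument is to convert each $\psi_a$ into an approximate joint stabilizer of two $\varphi$-orbit points. By Observation~\ref{obs:ping} applied to the reduced word $\varphi^M w_a^M$, the $\C*$-geodesic $[\varphi^{-M} x_0, w_a^M x_0]$ passes within $0.01 C$ of $x_0$ (the ping-pong bend). Fact~\ref{fact:quadri} applied to the quadrilateral with corners $\varphi^{-M} x_0,\, w_a^M x_0,\, \psi_a\varphi^{k_a}x_0,\, \psi_a\varphi^{m_a}x_0$ then $10\delta$-synchronizes $\psi_a([\varphi^{k_a}x_0, \varphi^{m_a}x_0])$ with this bent geodesic. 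Pulling back the bend yields $t_a \in [k_a, m_a]$ with $d_{\C*}(\psi_a\varphi^{t_a}x_0, x_0) \leq L_0$ for some $L_0 \leq 10\delta + 0.01 C \leq 10 C$, independent of $K$ and $M$. The ordering requirement $k < m$ in Definition~\ref{dfn:indepdent} forces $\psi_a$ to preserve time-orientation along the axis, so the same synchronization gives $d_{\C*}(\psi_a\varphi^{t_a - n} x_0, \varphi^{-n} x_0) \leq L_0$ for every $0 \leq n \leq M/10$. Setting $A_a := \psi_a \varphi^{t_a}$, each $A_a$ thus $10C$-stabilizes both $x_0$ and $\varphi^{-N} x_0$ once $N \leq M/10$.

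Corollary~\ref{cor:mainHomotopy} applied to $A_1, \ldots, A_{N+1}$ (with $k = -N$ and $L = 10C$) now produces $a \neq b$ such that $A_a A_b^{-1} = \psi_a \varphi^{t_a - t_b} \psi_b^{-1}$ is isotopic to the identity on $S$, hence $[\psi_a][\varphi]^{t_a - t_b} = [\psi_b]$ in $\Mod(S)$ (or in $\Mod(S \setminus P)$ in the torus case, after an ambient isotopy of each $\psi_a$ supported away from the shadowing curves to force it to preserve $P$). Evaluating this relation on $[\varphi]^{m_b}\mathfrak{x}_0 \in X$: the right side is $K$-close to $[w_b]^M\mathfrak{x}_0$ in $X$ by the shadowing and $d_X \leq d_{\C*}$, while the left side, by the analogous synchronization of $\psi_a$ along the $w_a$-leg of its bent path, is $O(K + \delta)$-close to $[w_a]^r \mathfrak{x}_0$ for some integer $r$ of order $M \cdot l_{w_b}/l_{w_a}$ (swapping $a, b$ if needed to keep $r$ inside the shadowing range). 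Since $\mathbf{W}$ is chosen so that distinct $w_a, w_b$ project to independent loxodromics in $X$ (as holds for $w_i = g^i h$ after sufficient powering-up of $g, h$, per the set-up preceding the claim), ping-pong in $X$ forces $d_X([w_a]^r \mathfrak{x}_0, [w_b]^M \mathfrak{x}_0) \gtrsim M$, contradicting the uniform bound obtained once $M$ is large enough. The main obstacle, beyond the careful bookkeeping of constants so that $L_0 \leq 10 C$, is this last comparison: ensuring that $\mathbf{W}$ gives a ping-pong family on $X$ (not merely on $\C*(S)$) and that the translation-length ratios $l_{w_b}/l_{w_a}$ stay in a range where the shadowing still applies.
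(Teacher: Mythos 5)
Your proof follows essentially the same route as the paper: contradiction assumption, synchronization of the bent geodesic $[\varphi^{-M}x_0, w_a^M x_0]$ with a $\psi_a$-translate of a $\varphi$-axis via Fact~\ref{fact:quadri} and Observation~\ref{obs:ping}, conversion of each $\psi_a$ to a group element $\psi_a\varphi^{t_a}$ that jointly $10C$-stabilizes $\varphi^{-N}x_0$ and $x_0$, an application of Corollary~\ref{cor:mainHomotopy} to extract an isotopy relation, and a ping-pong contradiction in $X$. The only cosmetic difference is that you evaluate the resulting $\Mod$-relation at the endpoint index $m_b$ (handling the synchronization range by a swap of $a$ and $b$), whereas the paper tracks a third marker $r_i''/q_i''$ at a fixed displacement $0.9MC$ and shows the offset $T$ is nearly uniform in $i$; both are routine bookkeeping for the same comparison.
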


To show this, suppose to the contrary that there exist distinct elements $w_{1}, \ldots, w_{N+1} \in \mathbf{W}$ such that $(\ldots, \varphi^{-2}, \varphi^{-1}, id, w_{i}, w_{i}^{2}, \ldots) \sim (\varphi^{k})_{k \in \Z}$ for $i=1, \ldots, N+1$. That menas, for each $i$ there exists $K_{i}>0$ such that, for each $M>0$ there exists $\psi_{M, i} \in \Homeo(S)$ and $D_{M, i} \in \Z_{>0}$ such that \[
d_{\C*(S)} (\varphi^{-M} x_{0}, \psi_{M, i} x_{0}) < K_{i}, \quad 
d_{\C*(S)} (w_{i}^{M} x_{0}, \psi_{M, i} \varphi^{D_{M, i}} x_{0}) < K_{i}.
\]
Let $W := \sum_{i=1}^{N+1} |w_{i}|_{\C*}$ and let us take an integer $M>100\left(N + \frac{1}{C}\max_{i=1, \ldots, N+1} K_{i} +1\right)$ such that the following implication holds:\begin{equation}\label{eqn:condW}\begin{aligned}
&\left[ (i \neq j \in \{1, \ldots, N\}) \wedge (k, l \in \Z) \wedge  \big( d_{\C*(S)} (x_{0}, w_{i}^{k} x_{0}) \ge 0.5MC\big) \right]\\
& \Rightarrow 
d_{X} \left( w_{i}^{k} \mathfrak{x}_{0}, w_{j}^{l} \mathfrak{x}_{0}\right) \ge 100(C + W).\end{aligned}
\end{equation}
Indeed, large $d_{\C*(S)} (x_{0}, w_{i}^{k} x_{0})$ implies large $k$ (uniform in finitely many $i$), and  $w_{i}^{k}  \mathfrak{x}_{0}$ cannot be $100C$-close to $\{w_{j}^{l} \mathfrak{x}_{0}\}_{l \in \Z}$ for distinct $i$, $j$ and large $k$ due to the independence of $w_{i}$ and $w_{j}$ on $X$.

Let $i \in \{1, \ldots, N+1\}$. Note that $[\varphi^{-M} x_{0}, w_{i}^{M} x_{0}]$ and $[\psi_{M, i} x_{0}, \psi_{M, i} \varphi^{D_{M, i}} x_{0}]$ are geodesics in a $\delta$-hyperbolic space $\C*(S)$ with pairwise $K_{i}$-close endpoints. By Fact \ref{fact:quadri}, there exist a subsegment $\gamma_{i}$ of $[\varphi^{-M} x_{0}, w_{i}^{M} x_{0}]$ and a subsegment $\eta_{i}$ of $[\psi_{M, i} x_{0}, \psi_{M, i} \varphi^{D_{M, i}} x_{0}]$ such that: \begin{enumerate}
\item $\gamma_{i}$ and $\eta_{i}$ are $10\delta$-synchronized;
\item $\gamma_{i}$ and $[\phi^{-M} x_{0}, w_{i}^{M} x_{0}]$ have pairwise $K_{i}$-close endpoints, and  
\item $\eta_{i}$ and $[\psi_{M, i} x_{0}, \psi_{M, i} \varphi^{D_{M, i}} x_{0}$ have pairwise $K_{i}$-close endpoints.
\end{enumerate}
Observation \ref{obs:ping}(2) tells us that there exist $q_{i}, q_{i}' \in  [\varphi^{-M} x_{0}, w_{i}^{M} x_{0}]$ that are $0.01C$-close to $\varphi^{-N} x_{0}, x_{0}$, respectively, where $q_{i}$ comes first. Note that\[\begin{aligned}
d_{\C*(S)}(\varphi^{-M} x_{0}, q_{i}) &\ge
d_{\C*(S)}(\varphi^{-M} x_{0}, \varphi^{-N} x_{0}) - 0.01C \\
&\ge (M - N) \cdot 0.99C - 0.01C > K_{i}.
\end{aligned}
\]
For a similar reason, $q_{i}'$ is $K_{i}$-far from $w_{i}^{M} x_{0}$. That means, $q_{i}, q_{i}'$ are on $\gamma_{i}$. Let $r_{i}, r_{i}' \in \eta_{i}$ be  pairwise $10\delta$-close to $q_{i} ,q_{i}'$, with $r_{i}$ coming first. Then \[\begin{aligned}
d_{\C*(S)} (r_{i}', \textrm{ending point of $\eta_{i}$}) &\ge 
d_{\C*(S)} (r_{i}',  \psi_{M, i} \varphi^{D_{M, i}} x_{0}) -K_{i} \\
&\ge 
d_{\C*(S)} (r_{i}', w_{i}^{2M} x_{0}) - d_{\C*(S)} (w_{i}^{2M} x_{0},  \psi_{M, i} \varphi^{D_{M, i}} x_{0}) - K_{i} \\
&\ge d_{\C*(S)} (q_{i}', w_{i}^{M} x_{0}) - 2K_{i} - 10\delta \\
&\ge 0.99C \cdot M - 2K_{i} - 0.01C \ge 0.95 MC.
\end{aligned}
\]
Hence, we can take a point $r_{i}''$ on $\eta_{i}$ that is right to $r_{i}'$ by distance $0.9MC$. Let $q_{i}''$ be the corresponding point on $\gamma_{i}$ that is $6\delta$-close to $r_{i}''$.

By Observation \ref{obs:ping}(2), $r_{i}, r_{i}', r_{i}''$ are $1.1C$-close to $\psi_{M, i}\varphi^{t(i)} x_{0}$, $\psi_{M, i}\varphi^{t'(i)} x_{0}$ and $\psi_{M, i}\varphi^{t''(i)} x_{0}$, respectively, for some $t(i) \le t'(i) \le t''(i)$. This implies \[\begin{aligned}
|\varphi^{t'(i) - t(i)}|_{\C*} &=d_{\C*(S)}( \psi_{M, i}\varphi^{t'(i)} x_{0}, \psi_{M, i} \varphi^{t(i)} x_{0}) \\
&\le d_{\C*(S)} (r_{i}, r_{i}') + 2.2C \le d_{\C*(S)} (\varphi^{-N}x_{0}, x_{0}) + 2.3C.
\end{aligned}
\]
Similarly, $|\varphi^{t'(i) - t(i)}|_{\C*}  \ge|\varphi^{N}|_{\C*} - 2.3C$ holds. Meanwhile, Observation \ref{obs:ping}(2) applied to the word $\varphi^{k} = \varphi \cdot \varphi \cdots \varphi$ implies the following: for each $0<l< k$, $|\varphi^{k}|_{\C*}$ and $|\varphi^{l}|_{\C*}$ differ by at least $|\varphi^{k-l}|_{\C*} - 0.02C \ge 0.9C|k-l|$. Combining these together, we observe that $t'(i) -t(i)$ and $N$ differ by at most 2. This in turn implies that $r_{i}'$ and $\psi_{M, i} \varphi^{t(i) + N} x_{0}$ is $3.1C$-close. Similarly, $t''(i) - t'(i)$ is almost constant in $i$: there exists $T \in \Z>0$ such that $|t''(i) - t'(i) - T| \le 2$ regardless of $i$. We thus observe that $\psi_{M, i} \varphi^{t(i) + N + T}x_{0}$ and $r_{i}''$ is $5.1C$-close. 

Finally, recall that $q_{i}''$ is a point on $[\varphi^{-M} x_{0}, w_{i}^{M} x_{0}]$ that appears later than $q_{i}'$. Hence, it is $(0.01C + |w_{i}|)$-close to $w_{i}^{k(i)} x_{0}$ for some $k(i) > 0$, by Observation \ref{obs:ping}(2). 

In summary, for $\phi_{i} := \psi_{M, i} \varphi^{t(i)+N}$ we have
 \[
d_{\C*(S)} (\varphi^{-N}x_{0}, \phi_{i} \varphi^{-N} x_{0}) < 1.2C, d_{\C*(S)} (x_{0}, \phi_{i} x_{0}) < 3.2C, d_{\C*(S)} (w_{i}^{k(i)} x_{0}, \phi_{i} \varphi^{T} x_{0}) < 5.2C + W.
\]
We now have $N+1$ homeomorphisms $\phi_{1}, \ldots, \phi_{N+1}$ that $L$-stabilize $\varphi^{-N} x_{0}$ and $\varphi^{N} x_{0}$. By Corollary \ref{cor:mainHomotopy}, there exist $i \neq j$ such that $\phi_{i} \phi_{j}^{-1}$ is isotopic to the identity map on $S$. In particular, the following curves: \[
\phi_{j} \varphi^{T} x_{0}, \quad \phi_{i} \varphi^{T} x_{0} = \phi_{i} \phi_{j}^{-1}\cdot \phi_{j} \varphi^{T} x_{0}
\]
are homotopic \textbf{on $S$}. If $S$ is a torus and $P$ is a singleton, then these curves are homotopic even on $S \setminus p$, a once-punctured torus. In conclusion, they correspond to  the same points on $X$.

This implies that \[\begin{aligned}
d_{X}(w_{i}^{k(i)} \mathfrak{x}_{0}, w_{j}^{k(j)} \mathfrak{x}_{0}) & \le 
d_{X}(w_{i}^{k(i)} \mathfrak{x}_{0}, \phi_{i} \varphi^{T} \mathfrak{x}_{0})   + 
d_{X}(\phi_{i} \varphi^{T} \mathfrak{x}_{0} ,w_{j}^{k(j)} \mathfrak{x}_{0})   \\
&= 
d_{X}(w_{i}^{k(i)} \mathfrak{x}_{0}, \phi_{i} \varphi^{T} \mathfrak{x}_{0})   + 
d_{X}(\phi_{j} \varphi^{T} \mathfrak{x}_{0} ,w_{j}^{k(j)} \mathfrak{x}_{0})  \\
&\le d_{\C*}(w_{i}^{k(i)} x_{0}, \phi_{i} \varphi^{T} x_{0})   + 
d_{\C*}(\phi_{j} \varphi^{T} x_{0} ,w_{j}^{k(j)}x_{0})  \le 11C + 2W.
\end{aligned}
\]

Meanwhile, recall that \[\begin{aligned}
d_{\C*(S)} (w_{i}^{k(i)}x_{0}, x_0) &\ge 
d_{\C*(S)}(r_{i}'', r_{i}') - d_{\C*(S)} (w_{i}^{k(i)}x_{0}, \phi_{i} \varphi^{T} x_0) - d_{\C*(S)} (\psi_{M,i} \varphi^{t(i) + N + T} x_0, r_{i}'') \\
&- d_{\C*(S)} (r_{i}', \psi_{M, i} \varphi^{t(i) + N}x_{0}) \ge 0.6MC.
\end{aligned}
\]
This contradicts our requirement for $M$ in Display \ref{eqn:condW}. Claim \ref{claim:singleHanded} now follows.

Using Claim \ref{claim:singleHanded}, we can construct an infinite subset $\mathbf{W}' \subseteq \mathbf{W}$ such that $(\ldots, \varphi^{-2}, \varphi^{-1}, id, g_{0}, g_{0}^{2}, \ldots) \not\sim (\varphi^{l})_{l \in \Z}$  for all $g_{0} \in \mathbf{W}'$. Now using a symmetric argument, we can find $g_{0}$ such that $(\ldots, g_{0}^{-2}, g_{0}^{-1}, id, \varphi, \varphi^{2}, \ldots) \not\sim (\varphi^{l})_{l \in \Z}$ as well. This ends the proof.
\end{proof}

\subsection{Construction of quasi-morphisms}
We now proceed to prove Theorem \ref{thm:inf}. Let $S$ be a surface of genus at least 1, let $U$ be an open disc on $S$, and let $p \in U$. There exists a pseudo-Anosov map $\varphi$ on $S \setminus \{p\}$. Note that $\varphi$ never belongs to $Fix_{U}$, as it mixes up $U \setminus \{p\}$ and $S \setminus U$. Nonetheless, $Fix_{U}$ does descend to the entire $\Mod(S \setminus \{p\})$ after quotienting by homotopy, so Proposition \ref{prop:quantNoQuasi} guarantees an element $g \in Fix_{U}$ such that \begin{equation}\label{eqn:nonInd}
(\ldots, \varphi^{-2}, \varphi^{-1}, id, g, g^{2}, \ldots) \not\sim (\varphi^{i})_{i \in \Z} \quad \textrm{and}\quad 
(\ldots, g^{-2}, g^{-1}, id, \varphi, \varphi^{2}, \ldots) \not\sim (\varphi^{i})_{i \in \Z}.
\end{equation}
Note that we can take such a $g$ in $\Homeo_{0}(S)$ when $S = \mathbb{T}^{2}$. In general we will have $g \neq id$ in $\Mod(S)$.

If $(g_{i})_{i \in \Z} \not\sim (h_{i})_{i \in \Z}$, then $(g_{i}')_{i \in \Z} \not\sim (h_{i}')_{i \in \Z}$ for any subsequence $(g_{i}')_{i \in \Z}$ of $(g_{i})_{i \in \Z}$ and $(h_{i}')_{i}$ of $(h_{i})_{i \in \Z}$. Hence the condition in Display \ref{eqn:nonInd} is preserved if we replace $g$ and $\varphi$ with their powers, respectively. Given this, it is not hard to guarantee Observation \ref{obs:ping}. Namely, up to replacing $g$ and $\varphi$ with their suitable powers, the following hold for some $C > 1000(\delta+1)$: \begin{enumerate}
\item $0.99C \le l_{g}, l_{\varphi} \le C$;
\item Let $w = a_{1} a_{2} \ldots a_{n}$ be a reduced word made of letters $\{g, g^{-1}, \varphi, \varphi^{-1}\}$, i.e., $a_{i} \neq a_{i+1}^{-1}$ for each $i$. Then there exist points $p_{1}, \ldots, p_{n-1} \in [x_{0}, wx_{0}]$, in order from closest to farthest from $x_{0}$, such that $d_{\C*(S)}(p_{i}, a_{1} \cdots a_{i} x_{0}) < 0.01C$.
\end{enumerate}

We will stick to these choices of $U$, $\varphi$, $g$. Recall the notation for each $w \in \Homeo(S)$: $l_{w}$ denotes the $d_{\C*(S)}$-translation lengths of $w$ and $|w|_{\C*}$ denotes  $d_{\C*(S)}(x_{0}, wx_{0})$. When we write $a \le_{K} b$ $(a \ge_{K} b$, resp.) it means $a \le b + K$ $(a \ge b - K$, resp.).

\begin{fact}\label{fact:separation}
\begin{enumerate}
\item For any $i_{2}, \ldots, i_{2N-1} \in \Z \setminus \{0\}$ and $i_{1}, i_{2N} \in \Z$, we have \[
\big|g^{i_{1}} \varphi^{i_{2}} \cdots  g^{i_{2N-1}} \varphi^{i_{2N}}\big|_{\C*} =_{0.1(N+1) C} \sum_{k=1}^{N} \Big( \big|i_{2k-1}\big| \cdot l_{g} + \big|i_{2k}\big| \cdot l_{\varphi}\Big).
\]
In particular, $d_{\C*(S)} (x_{0}, g^{a} \varphi^{b} x_{0}) =_{0.1 C} al_{g} + bl_{\varphi}$. 
\item There exists $K_{sep}>0$ such that the following holds.

Let $w$ and $v$ be reduced words made of letters $\{ g, g^{-1}, \varphi, \varphi^{-1}\}$ and let $\phi \in \Homeo(S)$ be such that $d(x_{0}, \phi x_{0}) < 10C$ and  $d(wx_{0}, \phi vx_{0}) < 10C$. Let \[
w = w_{f} w_{0} w_{b}, \quad v = v_{f} v_{0} v_{1} v_{b}
\]
be subword decompositions of $w$ and $v$, where $w_{0}, v_{0}$ are positive powers of $\varphi$ and $v_{1}$ is a positive power of $g$. Then the following inequalities cannot hold simultaneously: \begin{equation}\label{eqn:contra4Ineq}
|w_{f}|_{\C*} \le |v_{f}|_{\C*}+100C, \quad |w_{b}|_{\C*} \le |v_{b}|_{\C*} + 100C,\quad  |v_{0}|_{\C*} \ge K_{sep},\quad  |v_{1}|_{\C*} \ge K_{sep}.
\end{equation}
\end{enumerate}
\end{fact}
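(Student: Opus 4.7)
The plan splits naturally along the two parts.

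For part (1), I will apply Observation ping(2) directly to $w = g^{i_{1}}\varphi^{i_{2}}\cdots g^{i_{2N-1}}\varphi^{i_{2N}}$, viewed as a reduced word in $\{g^{\pm 1}, \varphi^{\pm 1}\}$ of total letter-length $n := \sum_{k} |i_{k}|$. This places ordered points $p_{1}, \ldots, p_{n-1}$ on the geodesic $[x_{0}, wx_{0}]$, each within $0.01C$ of the corresponding partial product. Keeping only the $2N - 1$ points at the block boundaries and putting $p_{0} := x_{0}$, $p_{2N} := wx_{0}$, the identity $|w|_{\C*} = \sum_{k=1}^{2N} d_{\C*}(p_{k-1}, p_{k})$ combined with the triangle inequality yields
\[
|w|_{\C*} \;=\; \sum_{k=1}^{2N} |a^{i_{k}}|_{\C*} \;\pm\; 0.04NC,
\]
where $a$ stands for $g$ or $\varphi$ according to parity. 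It then remains to note that $|a^{i}|_{\C*} = |i|\,l_{a} + O(\delta + d_{\C*}(x_{0}, \mathrm{axis}(a)))$ for any loxodromic isometry $a$ of a Gromov hyperbolic space. Since $g$ and $\varphi$ were already replaced by sufficiently high powers to satisfy Observation ping, this bounded additive constant is $O(0.03C)$, giving the total error $0.1(N+1)C$ claimed.

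The plan for part (2) is to argue by contradiction: assume all four inequalities hold with $K_{sep}$ arbitrarily large and derive a contradiction. I consider the geodesics $\alpha := [x_{0}, wx_{0}]$ and $\beta := [\phi x_{0}, \phi v x_{0}]$. By hypothesis, $d_{\C*}(x_{0}, \phi x_{0})$ and $d_{\C*}(wx_{0}, \phi v x_{0})$ are both less than $10C$, so Fact 4.2 yields subsegments $\alpha' \subseteq \alpha$, $\beta' \subseteq \beta$ (trimmed by $< 10C$ at each end) that are $10\delta$-synchronized. Applying Observation ping to both $w$ and $v$ together with part (1), I locate the $w_{0}$-block endpoints on $\alpha$ at distances $|w_{f}|_{\C*}$ and $|w_{f}|_{\C*} + |w_{0}|_{\C*}$ from $x_{0}$ (up to additive $O(NC)$), and the $v_{0}, v_{1}$ block endpoints on $\beta$ at positions $|v_{f}|_{\C*}$, $|v_{f}|_{\C*} + |v_{0}|_{\C*}$, $|v_{f}|_{\C*} + |v_{0}|_{\C*} + |v_{1}|_{\C*}$ from $\phi x_{0}$.

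The inequalities $|w_{f}|_{\C*} \le |v_{f}|_{\C*} + 100C$ and $|w_{b}|_{\C*} \le |v_{b}|_{\C*} + 100C$, combined with the $10\delta$-synchronization, then force the fellow-travel image on $\beta$ of the $w_{0}$-segment to cover the entire $v_{1}$-block up to $O(C)$ at each end, provided $K_{sep}$ exceeds a suitable multiple of $C$. Along this overlap, the points $w_{f}\,\varphi^{j} x_{0}$ on $\alpha$ (ranging over an index interval of length at least $K_{sep}/l_{\varphi} - O(1)$) are $O(C)$-close to the points $\phi v_{f} v_{0}\, g^{k(j)} x_{0}$ on $\beta$. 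Setting $\xi := w_{f}^{-1}\phi v_{f} v_{0} \in \Homeo(S)$, this realizes a $\ge K_{sep} - O(C)$ fellow-travel between a $\varphi$-orbit and a $\xi g \xi^{-1}$-orbit in $\C*(S)$, which forces by Gromov hyperbolicity the quasi-axes of $\varphi$ and $\xi g \xi^{-1}$ to share boundary endpoints: $\xi\{g^{+}, g^{-}\} = \{\varphi^{+}, \varphi^{-}\}$ in $\partial \C*(S)$.

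The main obstacle, as I see it, will be turning this coincidence of endpoints into a contradiction with the independence of $\varphi$ and $g$. The element $\xi$ depends on $\phi$, whose action on $\partial \C*(S)$ is not directly controlled by $|\phi|_{\C*} < 10C$. The strategy is to exploit the explicit factorization $\xi = w_{f}^{-1}\phi v_{f} v_{0}$: the factor $v_{0}$ is a positive power $\varphi^{m}$ with $m \to \infty$ as $K_{sep} \to \infty$, and its action on $\partial \C*(S)$ attracts every point outside $\{\varphi^{-}\}$ toward $\varphi^{+}$. Tracking $\xi g^{+}$ through the four factors and using the reduced-word structure of $w_{f}, v_{f}$ (together with the ping-pong dynamics from Observation ping), I expect to force $\xi g^{+}$ into a region of $\partial \C*(S)$ bounded away from $\{\varphi^{+}, \varphi^{-}\}$, producing the desired contradiction for $K_{sep}$ sufficiently large in terms of $C$, $\delta$, and the ping-pong constants.
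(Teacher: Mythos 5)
Your treatment of part (1) is essentially the paper's argument and is correct, modulo one technical point: the bound $\lvert a^{i}\rvert_{\C*} = \lvert i\rvert\, l_{a} + O(\delta + d_{\C*}(x_{0},\mathrm{axis}(a)))$ invokes the distance to the axis, which Observation ping does not directly control. The cleaner route is to apply ping(2) to $\varphi^{m}$ itself and observe the superadditivity $\lvert\varphi^{m+m'}\rvert_{\C*} \ge \lvert\varphi^{m}\rvert_{\C*} + \lvert\varphi^{m'}\rvert_{\C*} - 0.02C$, which by a Fekete-type argument gives $m\,l_{\varphi} \le \lvert\varphi^{m}\rvert_{\C*} \le m\,l_{\varphi} + 0.02C$ for all $m\ge 1$; summing over the $2N$ blocks then yields the error $0.1(N+1)C$. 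This is a small repair, not a structural issue.

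For part (2) you have correctly set up the synchronization of $[x_{0},wx_{0}]$ with $[\phi x_{0},\phi vx_{0}]$ and correctly located the block transitions, and you have correctly identified that the uncontrolled $\phi$ is the crux. But the way you propose to resolve it is the wrong tool, for two concrete reasons. First, a fellow-travel of finite length $\approx K_{sep} - O(C)$ between a $\varphi$-orbit segment and a $\xi g\xi^{-1}$-orbit segment does not force $\xi\{g^{\pm}\} = \{\varphi^{\pm}\}$; endpoint coincidence in a hyperbolic space requires unboundedly long fellow-travel for a fixed $\xi$, whereas here $\xi = w_{f}^{-1}\phi v_{f}v_{0}$ (and indeed $w,v,\phi$) all vary as $K_{sep}\to\infty$. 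Second, your plan to ``track $\xi g^{+}$ through the four factors'' via ping-pong breaks at the $\phi$ factor: $\phi$ is an arbitrary element of $\Homeo(S)$ subject only to $\lvert\phi\rvert_{\C*} < 10C$, which gives no control whatsoever over its action on $\partial\C*(S)$, so there are no ping-pong domains to push through.

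More fundamentally, the contradiction you are aiming for --- the independence of $\varphi$ and $g$ as loxodromics of $\C*(S)$ --- is too weak a hypothesis to prove the statement. Precisely because the $\Homeo(S)$-action on $\C*(S)$ is not WPD in the classical sense, independence of axes alone does not rule out isometries $\phi$ of bounded displacement $\lvert\phi\rvert_{\C*}<10C$ carrying a long $\varphi$-segment onto a $\varphi$-then-$g$ segment. The standing hypothesis that the proof actually needs is Display \eqref{eqn:nonInd}: $(\ldots,\varphi^{-1},\mathrm{id}, g, g^{2},\ldots)\not\sim(\varphi^{i})_{i\in\Z}$, itself an output of the metric WPD (Theorem \ref{thm:main} through Proposition \ref{prop:quantNoQuasi}). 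Given that, the paper's argument is direct: if the four inequalities hold with $K_{sep}$ large, then the $w_{0}$-block straddles \emph{both} the $v_{0}$-block ($\varphi$-powers) and the $v_{1}$-block ($g$-powers), so with $\psi := (\phi v_{f}v_{0})^{-1}w_{f}$ one reads off indices $i(k,l)<j(k,l)$ with $d_{\C*}(\varphi^{-k}x_{0},\psi\varphi^{i(k,l)}x_{0})\le 10C$ and $d_{\C*}(g^{l}x_{0},\psi\varphi^{j(k,l)}x_{0})\le 10C$ for all $k,l$ up to $\approx K_{sep}/C$. Letting $K_{sep}\to\infty$ produces witnesses for all pairs with a uniform constant $10C$, contradicting \eqref{eqn:nonInd}. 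Note also that you only overlap $w_{0}$ with the $v_{1}$-block: the overlap with both $v_{0}$ and $v_{1}$ is exactly what supplies the ``mixed'' $(\varphi^{-k}, g^{l})$ witnesses that feed into the interlaced sequence of \eqref{eqn:nonInd}, so this detail is not cosmetic.
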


The first item is a direct consequence of the triangle inequality and Observation \ref{obs:ping}(2). In the second item, if the 4 inequalities hold at the same time, then there exists $\phi_{k, l} \in \Homeo(S)$ and $i(k, l)< j(k, l)$ for each $k, l \le 10(K_{sep} - 1000C)/C$ such that \[
d_{\C*(S)} (\varphi^{-k} x_{0}, \phi_{k, l} \phi^{i(k, l)} x_{0}) \le 10C, \,\,
d_{\C*(S)} (g_{l} x_{0}, \phi_{k, l} \phi^{j(k, l)} x_{0}) \le 10C.
\]
If this is true for arbitrary $K_{sep}$, then $(\ldots, \varphi^{-2}, \varphi^{-1}, id, g, g^{2}, \ldots) \sim (\varphi^{i})_{i \in \Z}$, contradicting our assumption. Hence, the 4 inequalities do not hold simultaneously for some $K_{sep}$.

More generally, the following holds for the same reason:
\begin{fact} \label{fact:separationStrong}
The following holds for some $K_{sep}>0$. 

Let $\epsilon \in \{+1, -1\}$. Let $w$ and $v$ be reduced words made of letters $\{ g, g^{-1}, \varphi, \varphi^{-1}\}$ and let $\phi \in \Homeo(S)$ be such that $d(x_{0}, \phi x_{0}) < 10C$ and  $d(wx_{0}, \phi vx_{0}) < 10C$. Let \[
w = w_{f} w_{0} w_{b}, \quad v = v_{f} v_{0} v_{1} v_{b}
\]
be subword decompositions of $w$ and $v$. Suppose that $w_{0}$ and one of $\{v_{0}, v_{1}\}$ are positive powers of $\varphi^{\epsilon}$, and the other one of $\{v_{0}, v_{1}\}$ is a positive power of $g^{\epsilon}$. Then the following inequalities cannot hold simultaneously: \[
|w_{f}|_{\C*} \le |v_{f}|_{\C*}- 100C, \quad |w_{b}|_{\C*} \le |v_{b}|_{\C*} - 100C,\quad  |v_{0}|_{\C*} \ge K_{sep},\quad  |v_{1}|_{\C*} \ge K_{sep}.
\]
\end{fact}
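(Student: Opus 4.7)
The fact is a direct generalization of Fact \ref{fact:separation}(2), which handles the sub-case $\epsilon = +1$ with $v_0$ a positive power of $\varphi$ and $v_1$ a positive power of $g$ (the $-100C$ inequalities here are strictly stronger than the $+100C$ inequalities there, so that sub-case is already covered). The remaining three sub-cases---reversing the sign $\epsilon$ or swapping the roles of $v_0$ and $v_1$---follow by the same argument scheme with different bookkeeping. I argue by contradiction: if the four inequalities can be arranged for arbitrarily large $K_{sep}$, then I extract witnesses contradicting one of the two non-equivalences in Display \eqref{eqn:nonInd}.

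For the geometric setup, the endpoints of $[x_0, wx_0]$ and $\phi \cdot [x_0, vx_0]$ are pairwise $10C$-close, so Fact \ref{fact:quadri} gives $10\delta$-synchronized subsegments. Combining the two $-100C$ inequalities with Fact \ref{fact:separation}(1) and the trivial estimate $\big| |w|_{\C*} - |v|_{\C*} \big| \le 20C$ yields $|w_0|_{\C*} \ge |v_0|_{\C*} + |v_1|_{\C*} + 100C$, so the $v_0 v_1$-subsegment of $\phi \cdot [x_0, vx_0]$ sits well inside the $w_0$-subsegment of $[x_0, wx_0]$ under the fellow-traveling identification. Applying Observation \ref{obs:ping}(2) to the expanded word $w_f \cdot (\varphi^{\epsilon})^{a} \cdot w_b$ (where $w_0 = \varphi^{a\epsilon}$) places each intermediate orbit point $w_f \varphi^{k \epsilon} x_0$, $0 \le k \le a$, within $0.01C$ of $[x_0, wx_0]$; similarly each internal prefix of $v_0$ and $v_1$ is $0.01C$-close to $\phi \cdot [x_0, vx_0]$. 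Setting $\psi := w_f^{-1} \phi v_f$, matching across the synchronized segments produces $O(C)$-close pairings $d(\varphi^{\alpha(l) \epsilon} x_0, \psi u_l x_0) < O(C)$ for prefixes $u_l$ of $v_0$, and $d(\varphi^{\beta(k) \epsilon} x_0, \psi v_0 u'_k x_0) < O(C)$ for prefixes $u'_k$ of $v_1$, where the indices $\alpha, \beta$ are strictly increasing and $\alpha(|v_0|) \approx \beta(0)$.

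To conclude, in each sub-case I conjugate $\psi$ by the appropriate terminal factor of $v_0$ to obtain a single witness $\tilde\psi^{-1}$ sending an increasing chain of $\varphi^{\epsilon}$-orbit points to a mixed $g/\varphi$-sequence. When $\epsilon = +1$, $v_0 = \varphi^b$, and $v_1 = g^c$, this target reads $(\varphi^{-b}, \ldots, \varphi^{-1}, id, g, \ldots, g^c)$, contradicting the first non-equivalence as $K_{sep}$ (and hence $b, c$) tends to infinity; the swapped order $v_0 = g^c$, $v_1 = \varphi^b$ produces instead the target $(g^{-c}, \ldots, g^{-1}, id, \varphi, \ldots, \varphi^b)$, contradicting the second non-equivalence; and the two sub-cases with $\epsilon = -1$ yield the analogous targets (by the same extraction with all powers negated), contradicting one of the two non-equivalences accordingly. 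The main obstacle I anticipate is not the geometry (which is unchanged from Fact \ref{fact:separation}(2)) but the case-by-case sign bookkeeping: ensuring that, for each of the four configurations, the right combination of terminal factors and inversions aligns $\tilde\psi^{-1}$ with the correct mixed bi-infinite sequence.
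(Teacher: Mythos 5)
Your proof is correct and follows the same line the paper intends: assuming the four inequalities can be realized for arbitrarily large $K_{sep}$, extract $O(C)$-close matchings between a block of $\varphi^{\epsilon}$-orbit points along $[x_0, wx_0]$ and a mixed $\varphi^{\epsilon}/g^{\epsilon}$ chain along $\phi[x_0, vx_0]$ (via Fact \ref{fact:quadri} and Observation \ref{obs:ping}(2)), and observe that these witnesses yield one of the two forbidden equivalences in Display \eqref{eqn:nonInd} as $K_{sep} \to \infty$. The paper's own proof is a one-line appeal to ``the same reason'' as Fact \ref{fact:separation}(2), so your write-up simply supplies the fellow-traveling details and the sign/order bookkeeping that the paper leaves implicit; the only cosmetic slips are calling the right-translation by the terminal factor of $v_0$ a ``conjugation'' and asserting $\alpha, \beta$ are \emph{strictly} increasing (they are increasing up to bounded error, which suffices).
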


We will now take elements of the form \[
\psi_{a, b, c, d, e, f} := g^{a} \cdot \varphi^{f} g^{b} \varphi^{-f} \cdot g^{c} \cdot \varphi^{e} g^{-d} \varphi^{-e}.
\]
Our first claim is: \begin{claim}\label{claim:psiFixes}
For each $a, b, c, d, e, f \in \Z$, $\psi_{a, b, c, d, e, f}$ fixes a neighborhood of $p$. When $a+b+c = d$ in addition, $\psi_{a, b, c, d, e, f}$ belongs to $\Homeo_{0}(S)$.
\end{claim}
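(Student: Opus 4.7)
The plan is to prove the two parts of the claim by separate but related arguments: the first by directly tracking a point near $p$ through the letters of $\psi$, and the second by constructing a one-parameter family $\psi_t$ interpolating between $id$ and $\psi$ in $\Homeo(S)$. The second part will exploit that $\varphi \in \Homeo_{0}(S)$, a property I would arrange up front by taking $\varphi$ to be a point-push pseudo-Anosov along a filling loop based at $p$; such a $\varphi$ is pseudo-Anosov on $S \setminus \{p\}$ by Kra's theorem and is isotopic to the identity on $S$ by construction, and filling loops exist on every closed surface of genus at least one.

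For the first part, by continuity of $\varphi$ and the equality $\varphi(p)=p$, I can pick an open neighborhood $V \subseteq U$ of $p$ small enough that $\varphi^{\pm e}(V)$ and $\varphi^{\pm f}(V)$ all lie in $U$. Tracking any $q \in V$ through $\psi = g^{a}\varphi^{f}g^{b}\varphi^{-f}g^{c}\varphi^{e}g^{-d}\varphi^{-e}$ from right to left: $\varphi^{-e}$ sends $q$ into $U$, where $g^{-d}$ acts trivially (as $g \in Fix_{U}$); then $\varphi^{e}$ returns us to $q \in V \subseteq U$, where $g^{c}$ is trivial; then $\varphi^{-f}$ sends $q$ into $U$ where $g^{b}$ is trivial; then $\varphi^{f}$ returns us to $q$; and finally $g^{a}$ is trivial on $q$. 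Hence $\psi(q) = q$, so $\psi|_{V} = id$.

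For the second part, I would fix an isotopy $\{\varphi_{t}\}_{t \in [0,1]}$ in $\Homeo(S)$ with $\varphi_{0} = id$ and $\varphi_{1} = \varphi$, and set $\psi_{t} := g^{a}\varphi_{t}^{f}g^{b}\varphi_{t}^{-f}g^{c}\varphi_{t}^{e}g^{-d}\varphi_{t}^{-e}$. This is a continuous path of homeomorphisms with $\psi_{1} = \psi$ and $\psi_{0} = g^{a}g^{b}g^{c}g^{-d} = g^{a+b+c-d}$; under the hypothesis $a+b+c = d$ we have $\psi_{0} = id$, so $\{\psi_{t}\}$ is an isotopy from $id$ to $\psi$, certifying $\psi \in \Homeo_{0}(S)$. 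The main obstacle is that this argument genuinely needs $\varphi \in \Homeo_{0}(S)$: a pseudo-Anosov with nontrivial image in $\Mod(S)$ offers no cancellation of its conjugating factors in the interpolation, which is exactly why I commit upfront to the point-push choice; crucially this commitment is independent of whether $g$ itself lies in $\Homeo_{0}(S)$, so it handles both the torus case and higher-genus surfaces uniformly.
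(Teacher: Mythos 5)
Your proof is correct, and your argument for the first part is essentially the paper's: the paper observes that each of the four factors $g^{a}$, $\varphi^{f} g^{b} \varphi^{-f}$, $g^{c}$, $\varphi^{e} g^{-d} \varphi^{-e}$ fixes an open disc containing $p$ (namely $U$, $\varphi^{f} U$, $U$, $\varphi^{e} U$ respectively), so their product fixes the intersection $U \cap \varphi^{f}U \cap \varphi^{e}U$; your point-tracking through $V \subseteq U$ with $\varphi^{-e}(V), \varphi^{-f}(V) \subseteq U$ is the same reasoning with the neighborhood made concrete from the other side.

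For the second part the paper simply writes ``the latter claim is immediate'' with no argument, and you have correctly identified the hidden ingredient: the statement $a+b+c=d \Rightarrow \psi \in \Homeo_{0}(S)$ only follows if $\varphi$ itself lies in $\Homeo_{0}(S)$, since then $[\psi] = [g]^{a+b+c-d} = \mathrm{id}$ in $\Mod(S)$ (your interpolation $\psi_{t}$ is a clean way to phrase exactly this). The paper chooses $\varphi$ only as ``a pseudo-Anosov on $S\setminus\{p\}$'' without specifying that it is a point-push, and a generic such $\varphi$ has nontrivial image in $\Mod(S)$, so the condition $a+b+c=d$ would not help. Your choice of $\varphi$ as a point-push along a filling loop based at $p$ (pseudo-Anosov by Kra, and isotopic to the identity on $S$ by construction) is the standard way to realize the needed hypothesis, and it is almost certainly what the author has in mind; flagging it explicitly is a genuine improvement over the paper's one-word justification. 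Your remark that this choice is orthogonal to whether $g$ lies in $\Homeo_{0}(S)$ is also accurate — for the torus the paper notes one may take $g \in \Homeo_{0}(\mathbb{T}^{2})$ so that the constraint $a+b+c=d$ becomes superfluous, but in higher genus both the point-push choice for $\varphi$ and the constraint are needed.
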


\begin{proof}
The former claim follows because  $g^{a}$ and $g^{c}$ fix $U \ni p$, $\varphi^{f} g^{b} \varphi^{-f}$ fixes $\varphi^{f} U \ni p$, and $\varphi^{e} g^{-d} \varphi^{-e}$ fixes $\varphi^{e} U\ni p$. The latter claim is immediate.
\end{proof}

Next, we observe: \begin{claim}\label{claim:psiNonQuasiInv}
For $1 \ll a \ll b \ll c \le d  \ll e \ll f$ , $\psi_{a, b, c, d, e, f}$ is not quasi-invertible, i.e., $\psi_{a, b, c, d, e, f} \not\sim \psi_{a, b, c, d, e, f}^{-1}$.
\end{claim}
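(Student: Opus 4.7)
The plan is to argue by contradiction. Assume $\psi := \psi_{a,b,c,d,e,f}$ is quasi-invertible, so by Definition \ref{dfn:indepdent} there is a constant $K > 0$ (which I will enlarge if necessary so that $K \leq 10C$) with the property that for every sufficiently large $N$ there exist $M > 0$ and $\phi \in \Homeo(S)$ satisfying $d_{\C*(S)}(x_{0}, \phi x_{0}) < K$ and $d_{\C*(S)}(\psi^{N} x_{0}, \phi \psi^{-M} x_{0}) < K$ (after shifting $\phi$ by a suitable power of $\psi$). A translation-length comparison via Fact \ref{fact:separation}(1) forces $|M - N| \leq K_{0}$ for a constant $K_{0}$ depending only on $K$ and the parameters; in particular, only finitely many values of $\delta := M - N$ need to be handled.

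The decisive input will be Fact \ref{fact:separationStrong}. Writing the reduced words
\[
\psi = g^{a} \varphi^{f} g^{b} \varphi^{-f} g^{c} \varphi^{e} g^{-d} \varphi^{-e}, \quad \psi^{-1} = \varphi^{e} g^{d} \varphi^{-e} g^{-c} \varphi^{f} g^{-b} \varphi^{-f} g^{-a},
\]
one observes the sharp asymmetry: the only same-sign adjacent pair of a positive $\varphi$-syllable with a positive $g$-syllable appearing (per period) in $\psi^{-1}$ is $\varphi^{e} g^{d}$, of size $\approx e l_{\varphi} + d l_{g}$, while $\psi$ contains the much larger positive $\varphi$-syllable $\varphi^{f}$. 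The excess
\[
|\varphi^{f}|_{\C*} - |\varphi^{e} g^{d}|_{\C*} \approx (f - e) l_{\varphi} - d l_{g}
\]
is positive and large under $1 \ll a \ll b \ll c \leq d \ll e \ll f$, and it is this excess that provides the ``slack'' needed to satisfy the strict inequalities in Fact \ref{fact:separationStrong}. Symmetrically (using $\epsilon = -1$), the excess $|\varphi^{-f}|_{\C*} - |\varphi^{-e} g^{-c}|_{\C*} \approx (f-e) l_{\varphi} - c l_{g}$ gives room for a dual argument involving the negative patterns $\varphi^{-e} g^{-c}$ or $\varphi^{-f} g^{-a}$ in $\psi^{-1}$ against $\varphi^{-f}$ in $\psi$.

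For each value $\delta \in \{-K_{0}, \ldots, K_{0}\}$ I would choose $(w, v) = (\psi^{N}, \psi^{-M})$ (or the swap, depending on the sign of $\delta$), take $w_{0}$ to be the large positive syllable $\varphi^{f}$ in a period $n$ of $w$, and take $v_{0} v_{1}$ to be the adjacent pair $\varphi^{e} g^{d}$ in a period $m$ of $v$, with the period shift $m - n \in \{0, 1\}$ chosen so that the positions of $w_{0}$ and $v_{0} v_{1}$ balance the slack between the front and back. Using Fact \ref{fact:separation}(1), the quantities $|w_{f}|_{\C*}, |v_{f}|_{\C*}, |w_{b}|_{\C*}, |v_{b}|_{\C*}$ expand into affine combinations of $a, b, c, d, e, f$ and $m - n$, and the required strict inequalities reduce to explicit linear conditions. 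When $\delta \neq 0$, the length discrepancy $\delta \cdot l_{\psi}$ contributes the extra room; the sign of $\delta$ dictates which of $w, v$ should carry the single syllable, and a symmetric argument with $\epsilon = -1$ covers the dual sign case.

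The main obstacle is the equal-length case $\delta = 0$. Here the total slack $(|v_{f}|_{\C*} - |w_{f}|_{\C*}) + (|v_{b}|_{\C*} - |w_{b}|_{\C*})$ is pinned down exactly at $(f - e) l_{\varphi} - d l_{g}$ and must be split so that each of the two differences exceeds $100C$. This pins down the integer period shift $m - n$ to a narrow window, and one must verify by direct computation that an integer shift lies inside that window. The hierarchy $1 \ll a \ll b \ll c \leq d \ll e \ll f$ is calibrated for exactly this purpose: the intermediate scales $a, b, c$ are chosen large enough to dominate $100C$ (so that the lower bound on $m - n$ can be satisfied with room to spare) but small enough relative to the $\varphi$-scales $e, f$ that the excess term $(f - e) l_{\varphi}$ still leaves the upper bound above an integer multiple of $l_{\psi}$. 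Carrying out this balancing calculation is the delicate heart of the proof.
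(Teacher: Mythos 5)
Your overall strategy is in the same family as the paper's: extract from $\psi \sim \psi^{-1}$ a pair of fellow-traveling reduced words, then contradict one of the separation facts by exhibiting a same-sign $\varphi$-$g$ pair in $\psi^{-M}$ that must align with a long pure $\varphi$-syllable of $\psi^N$. However, the paper first runs a geodesic--synchronization argument (using Fact~\ref{fact:quadri}) to reduce to a single period---comparing $\psi$ against a cyclic conjugate $v = w_B^{-1}w_{B-\Xi}$ of $\psi^{-1}$ with error reduced to $10C$---and then does the case analysis over the $\Xi$ cyclic shifts $R$. You instead keep the whole words $\psi^N$ and $\psi^{-M}$ and case-split on $\delta = M-N$. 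There are two genuine gaps in your plan.

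First, the constant mismatch. The constant $K$ of Definition~\ref{dfn:indepdent} is an unspecified positive number depending on the pair; it has no reason to be $\le 10C$, and ``enlarging $K$ so that $K\le 10C$'' is not a meaningful reduction. Facts~\ref{fact:separation} and \ref{fact:separationStrong} are proved for the fixed tolerance $10C$ and produce a $K_{sep}$ for that tolerance; if you keep the ambient $K$, the resulting $K_{sep}$ depends on $K$, which itself depends on $\psi$ and hence on $a, \ldots, f$, circularly breaking the hierarchy $a \gg K_{sep}$ that your linear inequalities rely on. The paper avoids this exactly by the synchronization step: the geodesics $[\psi^{-T}x_0, \psi^Tx_0]$ and their $\phi_T$-translate have $K$-close endpoints, so long middle subsegments are $10\delta$-synchronized, and after re-centering one lands on a pair $(\phi, v)$ with $d_{\C*}(x_0,\phi x_0)<10C$ and $d_{\C*}(\psi x_0, \phi v x_0)<10C$. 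Your argument needs some version of this reduction.

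Second, the $\delta = 0$ case as you describe it does fail. Taking $w_0 = \varphi^f$ whole and $v_0v_1 = \varphi^e g^d$ whole, the total slack is $(f-e)l_\varphi - dl_g$, which lives inside a single period since $f-e-d < \Xi$. The constraint on the period shift then becomes $1 \le m-n \le 0$---an empty window---precisely because the lower bound forces $m>n$ while the upper bound, sitting strictly below one full period, forces $m\le n$. No choice of the scales $a,b,c,d,e,f$ fixes this, because the obstruction is that both the deficit $|w_f|-|v_f|$ and the surplus have to be absorbed inside one period and the quanta are $\Xi$. What rescues the argument (and what the paper does in every one of its nine cases) is to \emph{truncate} $v_0v_1$ to sub-syllables of length $K_{sep}$---for instance, $v_f = (\psi^{-1})^n \varphi^{e-K_{sep}}$, $v_0 = \varphi^{K_{sep}}$, $v_1 = g^{K_{sep}}$, $v_b = g^{d-K_{sep}}\cdots(\psi^{-1})^{M-n-1}$. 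This moves the split point continuously rather than in steps of $\Xi$, and then $m=n$ satisfies both inequalities with slack $\approx (e-a-K_{sep})C$ on the front and $\approx (a-K_{sep}+f-e)C$ on the back. The hierarchy is not what saves $\delta=0$; the truncation is, and it is missing from your proposal.

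Finally, the ``swap'' for $\delta < 0$ can in fact be made rigorous without any separate symmetry argument (replace $\phi$ by $\phi^{-1}$), but you should say so explicitly, since as stated it reads as an appeal to an unproved symmetry of $\sim$.
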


\begin{proof}
For convenience, let us write $\Xi := a+b+c+d+2e+2f$ and  $\psi := \psi_{a, b, c, d, e, f}$. Suppose to the contrary that $\psi \sim \psi^{-1}$. More explicitly, for some $K>0$, suppose for each $T \in \Z_{>0}$ there exist $\phi_{T} \in \Homeo(S)$ and $i<j$ such that \begin{equation}\label{eqn:contraT}
d_{\C*(S)} ( \psi^{-T} x_{0}, \phi_{T} \psi^{j} x_{0}) < K, \quad 
d_{\C*(S)} ( \psi^{T} x_{0}, \phi_{T} \psi^{i} x_{0}) < K.
\end{equation}
We pick $T\ge 100$ large enough such that \[
0.5T \cdot \Xi C \ge 10(K + C).
\]

For this $T$, we denote $[ \psi^{-T} x_{0}, \psi^{T} x_{0}]$ by $\gamma$ and $[\varphi_{T} \psi^{j} x_{0}, \phi_{T} \psi^{i} x_{0}]$ by $\eta$. They have pairwise $K$-close endpoints. Fact \ref{fact:quadri} guarantees points $x, y \in \gamma$, in order, and points $x', y' \in \eta$, in order, such that $[x, y]$ and $[x', y']$ are $10\delta$-synchronized and\[
d(x, y) > d(\psi^{-T} x_{0}, \psi^{T} x_{0}) - 2K \ge 1.98T\cdot \Xi C - 2K \ge 10\Xi C + 10C.
\]

Let us define subwords of $\psi = g^{a} \varphi^{f} g^{b} \varphi^{-f} g^{c} \varphi^{e} g^{-d} \varphi^{-e}$: \[\begin{aligned}
w_{1} := g, \ldots, w_{a} := g^{a}, w_{a+1} := g^{a} \varphi, \ldots, w_{a+f} := g^{a} \varphi^{f},\\
 \ldots, w_{a+2f + b} := g^{a} \varphi^{f} g^{b} \varphi^{-f}, \ldots, w_{\Xi} = w_{a+b+c+d+2e+2f} := \psi.
\end{aligned}
\]
We then define $w_{k \Xi + i} := \psi^{k} w_{i}$ for each $k \in \Z$ and $i \in \{1, \ldots, \Xi\}$.  Fact \ref{fact:separation}(1) tells us that \begin{equation}\label{eqn:constantWWP}
|w_{k}^{-1} w_{k + \Xi}|_{\C*} =_{C} (a+b+c+d)l_{g} + (2e+2f)l_{\varphi} \quad (\forall k \in \Z).
\end{equation}For example, for $0 \le k \le a$ we have \[\begin{aligned}
|w_{k}^{-1} w_{k + \Xi}|_{\C*} &= |g^{a-k} \varphi^{f} g^{b} \varphi^{-f} g^{c} \varphi^{e} g^{-d} \varphi^{-e} g^{k}|_{\C*} \\
&=_{0.1(5+1) \cdot C} ((a-k)+b+c+d + k)l_{g} + (2e+2f+0)l_{\varphi} \\
&=  (a+b+c+d)l_{g} + (2e+2f)l_{\varphi}.
\end{aligned}
\]

By Observation \ref{obs:ping}, there are points $p_{0}, \ldots, p_{2T\Xi} \in \gamma$, in order, such that $d_{\C*(S)} (w_{l} x_{0}, p_{l}) < 0.01C$ for each $l \in \{0, \ldots, 2T\Xi\}$. Let $A$ be the smallest integer such that $p_{A \cdot \Xi}$ comes later than $x \in \gamma$. In this case, $x$ lies in between $p_{(A-1)\Xi}$ and $p_{A \Xi}$, which are spaced by at most $1.1C \cdot \Xi + 0.02C$. Also, $p_{(A+1) \Xi}$ is $(1.1C \cdot \Xi + 0.02C)$-close to $p_{A \Xi}$. Hence, \[
d_{\C*(S)} (x, p_{(A+1)\Xi}) \le 2.2C \Xi + 0.04C \le d_{\C*(S)}(x, y).
\]
Since both $p_{A\Xi}$ and $p_{(A+1)\Xi}$ belongs to $[x, y]$, they are $10\delta$-close to $q, q' \in [x', y']$, respectively, with $q$ appearing earlier. Note that $d_{\C*(S)}(q, q')$ is $(20\delta + 0.04C)$-close to $d_{\C*(S)} (w_{A \cdot \Xi} x_{0}, w_{(A+1) \cdot \Xi} x_{0}) = |\psi|_{\C*}$. ($\ast$)

Meanwhile, Observation \ref{obs:ping} tells us that $\eta = \phi_{T} \psi^{i} \cdot [\psi^{j-i} x_{0}, x_{0}]$ contains a $1.1C$-dense sequence (in order) that $0.01C$-fellow travels with \[\begin{aligned}
& ( \phi_{T} \psi^{i} w_{(j-i) \Xi - l} x_{0})_{l = 0, \ldots, (j-i)\Xi} \\
 &= \big(\phi_{T} \psi^{j} x_{0} = \phi_{T} \psi^{i} w_{(j-i)} x_{0},\,\, \phi_{T} \psi^{i} w_{(j-i)-1} x_{0},\,\, \ldots,\,\, \phi_{T} \psi^{i} w_{1} x_{0},\,\, \phi_{T} \psi^{i} w_{0} x_{0} = \phi_{T} \psi^{i} x_{0} \big).
 \end{aligned}
 \]Hence, there exist an integer $B \in \{0, \ldots, (j-i)\Xi\}$ such that $\phi_{T} \psi^{i} \cdot w_{B} x_{0}$ is $C$-close to $q$. Let $q'' \in \eta$ be the point that is $0.01C$-close to $\phi_{T} \psi^{i} \cdot w_{B - \Xi} x_{0}$, which appears later than $q$. In this case, $d_{\C*(S)}(q, q'')$ is $(C + 0.01C)$-close to $d_{\C*(S)} (w_{B} x_{0}, w_{B - \Xi} x_{0}) =_{C} |\psi|_{\C*}$ (by Display \ref{eqn:constantWWP}). Finally, both $q'$ and $q''$ are to the right of $q$. Combining these facts with $(\ast$), we conclude that $d_{\C*(S)}(q', q'') \le 3C$. In summary, we have  \[
 d_{\C*(S)} ( \psi^{A} x_{0}, \phi_{T} \psi^{i} w_{B} x_{0}) < 10C, \quad  d_{\C*(S)} ( \psi^{A+1} x_{0}, \phi_{T} \psi^{i} w_{B - \Xi} x_{0}) < 10C.
 \]
 In other words, if we let $v := w_{B}^{-1} w_{B - \Xi}$ and $\phi := \psi_{A}^{-1} \phi_{T} \psi^{i} w_{B} \in \Homeo(S)$, then \begin{equation}\label{eqn:contraP}
 d_{\C*(S)} (x_{0}, \phi x_{0}) < 10C, \quad  d_{\C*(S)} ( \psi x_{0}, \phi vx_{0}) < 10C.
 \end{equation}
 
It remains to deduce contradiction from Inequality \ref{eqn:contraP}, given $K_{sep} \ll a \ll \ldots \ll f$. Let $R$ be the remainder of $B$ when divided by $\Xi$. See Figure \ref{fig:phiPowergPower}.

\begin{figure}
\centerline{\begin{tikzpicture}
\def\a{0.2}
\def\b{0.32}
\def\c{0.75}
\def\d{1.1}
\def\e{2}
\def\f{6.5}d
\def\s{0.2}
\def\vs{0.55}
\def\vss{0.35}
\def\col{0.13}

\clip (0,-3.5) rectangle (\a+\b+\c+\d+2*\e+2*\f, 0.3);

\draw[dashed] (\a, -3.3) rectangle (\a + \f, 0.2);

\draw[dashed](\a+\f+\b, -3.3) rectangle (\a + 2*\f+\b, 0.2);

\draw[dashed](\a+2*\f+\b + \c, -3.3) rectangle (\a +2*\f + \b + \c + \e, 0.2);

\draw[ dashed](\a+2*\f+\b + \c + \e + \d, -3.3) rectangle (\a +2*\f + \b + \c + \d + 2*\e, 0.2);

\draw[->, red, thick] (0, 0) -- (\a, 0);
\draw[->, blue, thick] (\a, 0) -- (\a+\f, 0);
\draw[->, red, thick] (\a + \f, 0) -- (\a + \f + \b, 0);
\draw[<-, blue, thick] (\a+\f+\b, 0) -- (\a + 2*\f + \b, 0);
\draw[->, red, thick] (\a + 2*\f + \b, 0)-- (\a + 2*\f + \b + \c, 0);
\draw[->, blue, thick]  (\a + 2*\f + \b + \c, 0)-- (\a + 2*\f + \b + \c + \e ,0);
\draw[<-, red, thick]   (\a + 2*\f + \b + \c + \e ,0)-- (\a + 2*\f + \b + \c + \e+\d ,0);
\draw[<-, blue, thick]   (\a + 2*\f + \b + \c + \e+\d ,0)-- (\a + 2*\f + \b + \c +2*\e+\d ,0);

\begin{scope}[xscale=-1, shift={(-\a-\b-\c-\d-2*\e-2*\f, -\vs)}]

\fill[opacity=0.13] (\a + \f - \s, -\col) rectangle (\a + \f + \s, \col);

\fill[opacity=0.13] (\a + 2*\f + \b + \c + \e+\d - \s, -\col) rectangle (\a + 2*\f + \b + \c + \e+\d + \s, \col);

\draw[->, red, thick] (0, 0) -- (\a, 0);
\draw[->, blue, thick] (\a, 0) -- (\a+\f, 0);
\draw[->, red, thick] (\a + \f, 0) -- (\a + \f + \b, 0);
\draw[<-, blue, thick] (\a+\f+\b, 0) -- (\a + 2*\f + \b, 0);
\draw[->, red, thick] (\a + 2*\f + \b, 0)-- (\a + 2*\f + \b + \c, 0);
\draw[->, blue, thick]  (\a + 2*\f + \b + \c, 0)-- (\a + 2*\f + \b + \c + \e ,0);
\draw[<-, red, thick]   (\a + 2*\f + \b + \c + \e ,0)-- (\a + 2*\f + \b + \c + \e+\d ,0);
\draw[<-, blue, thick]   (\a + 2*\f + \b + \c + \e+\d ,0)-- (\a + 2*\f + \b + \c +2*\e+\d ,0);

\end{scope}

\begin{scope}[xscale=-1]

\begin{scope}[shift={(-\a-\b-\c-\d-2*\e-2*\f - \a, -\vs - \vss)}]

\fill[opacity=0.13] (\a + 2*\f + \b + \c + \e+\d - \s, -\col) rectangle (\a + 2*\f + \b + \c + \e+\d + \s, \col);

\draw[->, red, thick] (0, 0) -- (\a, 0);
\draw[->, blue, thick] (\a, 0) -- (\a+\f, 0);
\draw[->, red, thick] (\a + \f, 0) -- (\a + \f + \b, 0);
\draw[<-, blue, thick] (\a+\f+\b, 0) -- (\a + 2*\f + \b, 0);
\draw[->, red, thick] (\a + 2*\f + \b, 0)-- (\a + 2*\f + \b + \c, 0);
\draw[->, blue, thick]  (\a + 2*\f + \b + \c, 0)-- (\a + 2*\f + \b + \c + \e ,0);
\draw[<-, red, thick]   (\a + 2*\f + \b + \c + \e ,0)-- (\a + 2*\f + \b + \c + \e+\d ,0);
\draw[<-, blue, thick]   (\a + 2*\f + \b + \c + \e+\d ,0)-- (\a + 2*\f + \b + \c +2*\e+\d ,0);

\end{scope}

\begin{scope}[shift={(- \a, -\vs - \vss)}]
\draw[->, red, thick] (0, 0) -- (\a, 0);
\draw[->, blue, thick] (\a, 0) -- (\a+\f, 0);
\draw[->, red, thick] (\a + \f, 0) -- (\a + \f + \b, 0);
\draw[<-, blue, thick] (\a+\f+\b, 0) -- (\a + 2*\f + \b, 0);
\draw[->, red, thick] (\a + 2*\f + \b, 0)-- (\a + 2*\f + \b + \c, 0);
\draw[->, blue, thick]  (\a + 2*\f + \b + \c, 0)-- (\a + 2*\f + \b + \c + \e ,0);
\draw[<-, red, thick]   (\a + 2*\f + \b + \c + \e ,0)-- (\a + 2*\f + \b + \c + \e+\d ,0);
\draw[<-, blue, thick]   (\a + 2*\f + \b + \c + \e+\d ,0)-- (\a + 2*\f + \b + \c +2*\e+\d ,0);
\end{scope}
\end{scope}


\begin{scope}[xscale=-1]
\begin{scope}[shift={(-\a-\b-\c-\d-2*\e-2*\f-\f - \a +\e + \d, -\vs - \vss*2)}]

\fill[opacity=0.13] (\a + 2*\f + \b + \c + \e+\d - \s, -\col) rectangle (\a + 2*\f + \b + \c + \e+\d + \s, \col);

\fill[opacity=0.13] (\a + 2*\f + \b + \c  - \s, -\col) rectangle (\a + 2*\f + \b + \c + \s, \col);

\draw[->, red, thick] (0, 0) -- (\a, 0);
\draw[->, blue, thick] (\a, 0) -- (\a+\f, 0);
\draw[->, red, thick] (\a + \f, 0) -- (\a + \f + \b, 0);
\draw[<-, blue, thick] (\a+\f+\b, 0) -- (\a + 2*\f + \b, 0);
\draw[->, red, thick] (\a + 2*\f + \b, 0)-- (\a + 2*\f + \b + \c, 0);
\draw[->, blue, thick]  (\a + 2*\f + \b + \c, 0)-- (\a + 2*\f + \b + \c + \e ,0);
\draw[<-, red, thick]   (\a + 2*\f + \b + \c + \e ,0)-- (\a + 2*\f + \b + \c + \e+\d ,0);
\draw[<-, blue, thick]   (\a + 2*\f + \b + \c + \e+\d ,0)-- (\a + 2*\f + \b + \c +2*\e+\d ,0);

\end{scope}

\begin{scope}[shift={(- \f -\a+ \e + \d, -\vs - \vss*2)}]
\draw[->, red, thick] (0, 0) -- (\a, 0);
\draw[->, blue, thick] (\a, 0) -- (\a+\f, 0);
\draw[->, red, thick] (\a + \f, 0) -- (\a + \f + \b, 0);
\draw[<-, blue, thick] (\a+\f+\b, 0) -- (\a + 2*\f + \b, 0);
\draw[->, red, thick] (\a + 2*\f + \b, 0)-- (\a + 2*\f + \b + \c, 0);
\draw[->, blue, thick]  (\a + 2*\f + \b + \c, 0)-- (\a + 2*\f + \b + \c + \e ,0);
\draw[<-, red, thick]   (\a + 2*\f + \b + \c + \e ,0)-- (\a + 2*\f + \b + \c + \e+\d ,0);
\draw[<-, blue, thick]   (\a + 2*\f + \b + \c + \e+\d ,0)-- (\a + 2*\f + \b + \c +2*\e+\d ,0);
\end{scope}

\end{scope}


\begin{scope}[xscale=-1]
\begin{scope}[shift={(-\a-\b-\c-\d-2*\e-2*\f -\f - \a, -\vs - \vss*3)}]

\fill[opacity=0.13] (\a + 2*\f + \b + \c  - \s, -\col) rectangle (\a + 2*\f + \b + \c + \s, \col);

\draw[->, red, thick] (0, 0) -- (\a, 0);
\draw[->, blue, thick] (\a, 0) -- (\a+\f, 0);
\draw[->, red, thick] (\a + \f, 0) -- (\a + \f + \b, 0);
\draw[<-, blue, thick] (\a+\f+\b, 0) -- (\a + 2*\f + \b, 0);
\draw[->, red, thick] (\a + 2*\f + \b, 0)-- (\a + 2*\f + \b + \c, 0);
\draw[->, blue, thick]  (\a + 2*\f + \b + \c, 0)-- (\a + 2*\f + \b + \c + \e ,0);
\draw[<-, red, thick]   (\a + 2*\f + \b + \c + \e ,0)-- (\a + 2*\f + \b + \c + \e+\d ,0);
\draw[<-, blue, thick]   (\a + 2*\f + \b + \c + \e+\d ,0)-- (\a + 2*\f + \b + \c +2*\e+\d ,0);

\end{scope}

\begin{scope}[shift={(-\f - \a, -\vs - \vss*3)}]
\draw[->, red, thick] (0, 0) -- (\a, 0);
\draw[->, blue, thick] (\a, 0) -- (\a+\f, 0);
\draw[->, red, thick] (\a + \f, 0) -- (\a + \f + \b, 0);
\draw[<-, blue, thick] (\a+\f+\b, 0) -- (\a + 2*\f + \b, 0);
\draw[->, red, thick] (\a + 2*\f + \b, 0)-- (\a + 2*\f + \b + \c, 0);
\draw[->, blue, thick]  (\a + 2*\f + \b + \c, 0)-- (\a + 2*\f + \b + \c + \e ,0);
\draw[<-, red, thick]   (\a + 2*\f + \b + \c + \e ,0)-- (\a + 2*\f + \b + \c + \e+\d ,0);
\draw[<-, blue, thick]   (\a + 2*\f + \b + \c + \e+\d ,0)-- (\a + 2*\f + \b + \c +2*\e+\d ,0);
\end{scope}

\end{scope}


\begin{scope}[xscale=-1]
\begin{scope}[shift={(-\a-\b-\c-\d-2*\e-2*\f - 2*\f - \a - \b + 2*\e + \c + \d  , -\vs - \vss*4)}]

\fill[opacity=0.13] (\a + 2*\f + \b + \c  - \s, -\col) rectangle (\a + 2*\f + \b + \c + \s, \col);

\draw[->, red, thick] (0, 0) -- (\a, 0);
\draw[->, blue, thick] (\a, 0) -- (\a+\f, 0);
\draw[->, red, thick] (\a + \f, 0) -- (\a + \f + \b, 0);
\draw[<-, blue, thick] (\a+\f+\b, 0) -- (\a + 2*\f + \b, 0);
\draw[->, red, thick] (\a + 2*\f + \b, 0)-- (\a + 2*\f + \b + \c, 0);
\draw[->, blue, thick]  (\a + 2*\f + \b + \c, 0)-- (\a + 2*\f + \b + \c + \e ,0);
\draw[<-, red, thick]   (\a + 2*\f + \b + \c + \e ,0)-- (\a + 2*\f + \b + \c + \e+\d ,0);
\draw[<-, blue, thick]   (\a + 2*\f + \b + \c + \e+\d ,0)-- (\a + 2*\f + \b + \c +2*\e+\d ,0);

\end{scope}

\begin{scope}[shift={(-2*\f  - \a - \b + 2*\e + \c + \d , -\vs - \vss*4)}]

\fill[opacity=0.13] (\a  - \s, -\col) rectangle (\a + \s, \col);

\draw[->, red, thick] (0, 0) -- (\a, 0);
\draw[->, blue, thick] (\a, 0) -- (\a+\f, 0);
\draw[->, red, thick] (\a + \f, 0) -- (\a + \f + \b, 0);
\draw[<-, blue, thick] (\a+\f+\b, 0) -- (\a + 2*\f + \b, 0);
\draw[->, red, thick] (\a + 2*\f + \b, 0)-- (\a + 2*\f + \b + \c, 0);
\draw[->, blue, thick]  (\a + 2*\f + \b + \c, 0)-- (\a + 2*\f + \b + \c + \e ,0);
\draw[<-, red, thick]   (\a + 2*\f + \b + \c + \e ,0)-- (\a + 2*\f + \b + \c + \e+\d ,0);
\draw[<-, blue, thick]   (\a + 2*\f + \b + \c + \e+\d ,0)-- (\a + 2*\f + \b + \c +2*\e+\d ,0);

\end{scope}

\end{scope}


\begin{scope}[xscale=-1]
\begin{scope}[shift={(-\a-\b-\c-\d-2*\e-2*\f -2*\f - \a - \b, -\vs - \vss*5)}]
\draw[->, red, thick] (0, 0) -- (\a, 0);
\draw[->, blue, thick] (\a, 0) -- (\a+\f, 0);
\draw[->, red, thick] (\a + \f, 0) -- (\a + \f + \b, 0);
\draw[<-, blue, thick] (\a+\f+\b, 0) -- (\a + 2*\f + \b, 0);
\draw[->, red, thick] (\a + 2*\f + \b, 0)-- (\a + 2*\f + \b + \c, 0);
\draw[->, blue, thick]  (\a + 2*\f + \b + \c, 0)-- (\a + 2*\f + \b + \c + \e ,0);
\draw[<-, red, thick]   (\a + 2*\f + \b + \c + \e ,0)-- (\a + 2*\f + \b + \c + \e+\d ,0);
\draw[<-, blue, thick]   (\a + 2*\f + \b + \c + \e+\d ,0)-- (\a + 2*\f + \b + \c +2*\e+\d ,0);

\end{scope}

\begin{scope}[shift={(-2*\f - \a - \b, -\vs - \vss*5)}]

\fill[opacity=0.13] (\a  - \s, -\col) rectangle (\a + \s, \col);

\fill[opacity=0.13] (-\e  - \s, -\col) rectangle (-\e + \s, \col);

\draw[->, red, thick] (0, 0) -- (\a, 0);
\draw[->, blue, thick] (\a, 0) -- (\a+\f, 0);
\draw[->, red, thick] (\a + \f, 0) -- (\a + \f + \b, 0);
\draw[<-, blue, thick] (\a+\f+\b, 0) -- (\a + 2*\f + \b, 0);
\draw[->, red, thick] (\a + 2*\f + \b, 0)-- (\a + 2*\f + \b + \c, 0);
\draw[->, blue, thick]  (\a + 2*\f + \b + \c, 0)-- (\a + 2*\f + \b + \c + \e ,0);
\draw[<-, red, thick]   (\a + 2*\f + \b + \c + \e ,0)-- (\a + 2*\f + \b + \c + \e+\d ,0);
\draw[<-, blue, thick]   (\a + 2*\f + \b + \c + \e+\d ,0)-- (\a + 2*\f + \b + \c +2*\e+\d ,0);

\end{scope}

\end{scope}


\begin{scope}[xscale=-1]
\begin{scope}[shift={(-\a-\b-\c-\d-2*\e-2*\f -2*\f - 2*\a - 2*\b, -\vs - \vss*6)}]
\draw[->, red, thick] (0, 0) -- (\a, 0);
\draw[->, blue, thick] (\a, 0) -- (\a+\f, 0);
\draw[->, red, thick] (\a + \f, 0) -- (\a + \f + \b, 0);
\draw[<-, blue, thick] (\a+\f+\b, 0) -- (\a + 2*\f + \b, 0);
\draw[->, red, thick] (\a + 2*\f + \b, 0)-- (\a + 2*\f + \b + \c, 0);
\draw[->, blue, thick]  (\a + 2*\f + \b + \c, 0)-- (\a + 2*\f + \b + \c + \e ,0);
\draw[<-, red, thick]   (\a + 2*\f + \b + \c + \e ,0)-- (\a + 2*\f + \b + \c + \e+\d ,0);
\draw[<-, blue, thick]   (\a + 2*\f + \b + \c + \e+\d ,0)-- (\a + 2*\f + \b + \c +2*\e+\d ,0);

\end{scope}

\begin{scope}[shift={(-2*\f - 2*\a - 2*\b, -\vs - \vss*6)}]

\fill[opacity=0.13] (-\e  - \s, -\col) rectangle (-\e + \s, \col);
\fill[opacity=0.13] (\a + \f  - \s, -\col) rectangle (\a +\f+ \s, \col);

\draw[->, red, thick] (0, 0) -- (\a, 0);
\draw[->, blue, thick] (\a, 0) -- (\a+\f, 0);
\draw[->, red, thick] (\a + \f, 0) -- (\a + \f + \b, 0);
\draw[<-, blue, thick] (\a+\f+\b, 0) -- (\a + 2*\f + \b, 0);
\draw[->, red, thick] (\a + 2*\f + \b, 0)-- (\a + 2*\f + \b + \c, 0);
\draw[->, blue, thick]  (\a + 2*\f + \b + \c, 0)-- (\a + 2*\f + \b + \c + \e ,0);
\draw[<-, red, thick]   (\a + 2*\f + \b + \c + \e ,0)-- (\a + 2*\f + \b + \c + \e+\d ,0);
\draw[<-, blue, thick]   (\a + 2*\f + \b + \c + \e+\d ,0)-- (\a + 2*\f + \b + \c +2*\e+\d ,0);

\end{scope}

\end{scope}

\begin{scope}[xscale=-1, shift={(-\a-\b-\c-\d-2*\e-2*\f, -\vs-\vss*7)}]

\fill[opacity=0.13] (\a + \f  - \s, -\col) rectangle (\a +\f+ \s, \col);

\draw[->, red, thick] (0, 0) -- (\a, 0);
\draw[->, blue, thick] (\a, 0) -- (\a+\f, 0);
\draw[->, red, thick] (\a + \f, 0) -- (\a + \f + \b, 0);
\draw[<-, blue, thick] (\a+\f+\b, 0) -- (\a + 2*\f + \b, 0);
\draw[->, red, thick] (\a + 2*\f + \b, 0)-- (\a + 2*\f + \b + \c, 0);
\draw[->, blue, thick]  (\a + 2*\f + \b + \c, 0)-- (\a + 2*\f + \b + \c + \e ,0);
\draw[<-, red, thick]   (\a + 2*\f + \b + \c + \e ,0)-- (\a + 2*\f + \b + \c + \e+\d ,0);
\draw[<-, blue, thick]   (\a + 2*\f + \b + \c + \e+\d ,0)-- (\a + 2*\f + \b + \c +2*\e+\d ,0);

\end{scope}

\end{tikzpicture}}
\caption{Schematics for $\psi = g^{a} \varphi^{f} g^{b} \varphi^{-f} g^{c} \varphi^{e} g^{d} \varphi^{-e}$ and various cyclic conjugates $w_{B}^{-1} w_{B - \Xi}$ of $\psi^{-1}$. The shadowed part marks subwords of  $w_{B}^{-1} w_{B - \Xi}$ of the form $g^{\pm K_{sep}} \varphi^{\pm K_{sep}}$ or  $\varphi^{\pm K_{sep}} g^{\pm K_{sep}}$ that fellow travel with with $\varphi^{\pm 2K_{Sep}}$.}
\label{fig:phiPowergPower}
\end{figure}
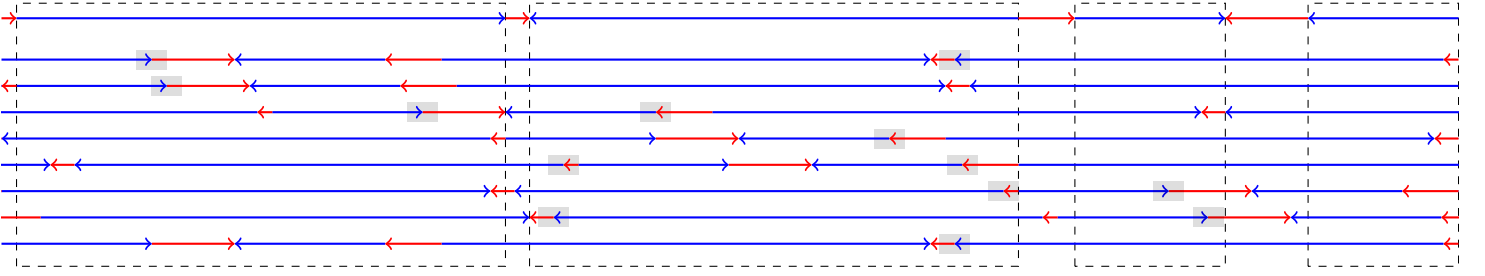

\begin{enumerate}
\item When $R \in \{0, \ldots, a\}$: in this case, we have \[\begin{aligned}
\psi &= (g^{a}) \cdot \varphi^{f} \cdot ( g^{b}\varphi^{-f} g^{c} \varphi^{e} g^{-d} \varphi^{-e}),
\\
v &= (g^{-R} \varphi^{e - K_{sep}}) \cdot \varphi^{K_{sep}}g^{d} \cdot (\varphi^{-e} g^{-c} \varphi^{f}g^{-b} \varphi^{-f } \cdot g^{R-a}).
\end{aligned}
\]
Here, note that \[\begin{aligned}
|g^{a}|_{\C*} &=_{10C}al_{g} \le R l_{g} + (e-K_{sep})l_{\varphi} =_{10C} | g^{-R} \varphi^{e-K_{sep}}|_{\C*}, \\
|g^{b}\varphi^{-f} g^{c} \varphi^{e} g^{-d} \varphi^{-e}|_{\C*} &=_{10C} (b+c+d)l_{g} + (2e+f)l_{\varphi} \\
&\le (b+c) l_{g} + (e + 2f) l_{\varphi} + (a-R) l_{g} =_{10C} |\varphi^{-e} g^{-c} \varphi^{f}g^{-b} \varphi^{-f } \cdot g^{R-a}|_{\C*}.
\end{aligned}
\]
Furthermore, we have $|\varphi^{K_{sep}}|_{\C*} \ge_{2C} 0.5C \cdot K_{Sep} \ge K_{sep}$. Likewise, for large $d \gg K_{sep}$ we have $|g^{d}|_{\C*}\ge K_{sep}$. Hence, the 4 inequalities in Display \ref{eqn:contra4Ineq} hold, a contradiction.

\item When $R \in \{a, \ldots, a + f - 2e \}$ : we have \[\begin{aligned}
\psi &= (g^{a}) \cdot \varphi^{f} \cdot ( g^{b}\varphi^{-f} g^{c} \varphi^{e} g^{-d} \varphi^{-e}),
\\
v &= (\varphi^{-(R-a)} g^{-a} \varphi^{e - K_{sep}}) \cdot \varphi^{K_{sep}}g^{d} \cdot (\varphi^{-e} g^{-c} \varphi^{f}g^{-b} \varphi^{-f + (R-a)}).
\end{aligned}
\]
Here, note that \[\begin{aligned}
|g^{a}|_{\C*} &=_{10C}al_{g} \le al_{g} + (e-K_{sep}+(R-a)) l_{\varphi} =_{10C} | \varphi^{-(R-a)}g^{-a} \varphi^{e-K_{sep}}|_{\C*}, \\
|g^{b}\varphi^{-f} g^{c} \varphi^{e} g^{-d} \varphi^{-e}|_{\C*} &=_{10C} (b+c+d)l_{g} + (2e+f)l_{\varphi} \le (b+c) l_{g} + (3e+f) l_{\varphi}\\
&\le (b+c) l_{g} + (e +2f - (R-a)) l_{\varphi}  =_{10C} |\varphi^{-e} g^{-c} \varphi^{f}g^{-b} \varphi^{-f + (R-a)}|_{\C*}.
\end{aligned}
\]
We also again have $|g^{-d}|_{\C*}, |\varphi^{K_{sep}}|_{\C*} \ge K_{sep}$, a contradiction.

\item $R \in \{a+f-2e, \ldots, a+ f\}$: we have \[\begin{aligned}
\psi &= (g^{a}  \varphi^{f}  g^{b}) \cdot \varphi^{-f} \cdot (g^{c} \varphi^{e} g^{-d} \varphi^{-e}),
\\
v &= (\varphi^{-(R-a)} g^{-a} \varphi^{e } g^{d} \cdot \varphi^{-e +K_{sep}}) \cdot \varphi^{-K_{sep}} g^{-c} \cdot (\varphi^{f}g^{-b} \varphi^{-f + (R-a)}).
\end{aligned}
\]
Note that \[
\begin{aligned}
|g^{a}\varphi^{f} g^{b}|_{\C*} &=_{10C}(a+b) l_{g} + f l_{\varphi} \le (a+d)l_{g} + (f - K_{sep})l_{\varphi} \\
&\le (a+d)l_{g} + (R-a + 2e - K_{sep}) l_{\varphi} =_{10C} |\varphi^{-(R-a)} g^{-a} \varphi^{e } g^{d} \cdot \varphi^{-e +K_{sep}} |_{\C*}, \\
|g^{c} \varphi^{e} g^{-d} \varphi^{-e}|_{\C*} &=_{10C} (c+d)l_{g} + 2e l_{\varphi} \le f l_{g} \\
&\le fl_{g} + (2f - R + a)  l_{\varphi} =_{10C} |\varphi^{f}g^{-b} \varphi^{-f + (R-a)}|_{\C*}.
\end{aligned}
\]
Further, $|\varphi^{-K_{sep}}|_{\C*}, |g^{-c}|_{\C*} \ge K_{sep}$. This contradicts Display \ref{eqn:contra4Ineq}.

\item $R \in \{a+f, \ldots, a+ f+b\}$: we have \[\begin{aligned}
\psi &= (g^{a}  \varphi^{f}  g^{b}) \cdot \varphi^{-f} \cdot (g^{c} \varphi^{e} g^{-d} \varphi^{-e}),
\\
v &= (g^{-(R - a - f)}\varphi^{-f} g^{-a} \varphi^{e } g^{d} \cdot \varphi^{-e +K_{sep}}) \cdot \varphi^{-K_{sep}} g^{-c} \cdot (\varphi^{f}g^{-b + (R-a-f)}).
\end{aligned}
\]
Note that \[
\begin{aligned}
|g^{a}\varphi^{f} g^{b}|_{\C*} &=_{10C}(a+b) l_{g} + f l_{\varphi} \le (a+d)l_{g} + (f - K_{sep})l_{\varphi} \\
&\le (a+d + (R-a-f))l_{g} + (2e  + f - K_{sep}) l_{\varphi} \\
&=_{10C} |g^{-(R - a - f)}\varphi^{-f} g^{-a} \varphi^{e } g^{d} \cdot \varphi^{-e +K_{sep}} |_{\C*}, \\
|g^{c} \varphi^{e} g^{-d} \varphi^{-e}|_{\C*} &=_{10C} (c+d)l_{g} + 2e l_{\varphi} \le f l_{g} \\
&\le fl_{g} + (2f - R + a)  l_{\varphi} =_{10C} |\varphi^{f}g^{-b + (R-a-f)}|_{\C*}.
\end{aligned}
\]
Further, $|\varphi^{-K_{sep}}|_{\C*}, |g^{-c}|_{\C*} \ge K_{sep}$. This contradicts Display \ref{eqn:contra4Ineq}.

\item $R \in \{a+f+b, \ldots, a+ f+2b\}$: we have \[\begin{aligned}
\psi &= (g^{a}  \varphi^{f}  g^{b}) \cdot \varphi^{-f} \cdot (g^{c} \varphi^{e} g^{-d} \varphi^{-e}),
\\
v &= (\varphi^{R - (a+f+b)}g^{-b}\varphi^{-f} g^{-a} \varphi^{e } g^{d}  \varphi^{-e +K_{sep}}) \cdot \varphi^{-K_{sep}} g^{-c} \cdot (\varphi^{a+b+2f - R}).
\end{aligned}
\]
Note that \[
\begin{aligned}
|g^{a}\varphi^{f} g^{b}|_{\C*} &=_{10C}(a+b) l_{g} + f l_{\varphi} \le (a+d)l_{g} + (f - K_{sep})l_{\varphi} \\
&\le (a+b+d)l_{g} + (2e  + R -a-b - K_{sep} ) l_{\varphi} \\
&=_{10C} |\varphi^{R - (a+f+b)}g^{-b}\varphi^{-f} g^{-a} \varphi^{e } g^{d}  \varphi^{-e +K_{sep}} |_{\C*}, \\
|g^{c} \varphi^{e} g^{-d} \varphi^{-e}|_{\C*} &=_{10C} (c+d)l_{g} + 2e l_{\varphi} \le (f-b) l_{g} \\
&\le (a+b+2f - R)  l_{\varphi} =_{10C} |\varphi^{a+b+2f - R}|_{\C*}.
\end{aligned}
\]
Further, $|\varphi^{-K_{sep}}|_{\C*}, |g^{-c}|_{\C*} \ge K_{sep}$. This contradicts Display \ref{eqn:contra4Ineq}.

\item $R \in \{a+f+2b, \ldots, a+ 2f+b\}$: we have \[\begin{aligned}
\psi &= (g^{a}  \varphi^{f}  g^{b}) \cdot \varphi^{-f} \cdot (g^{c} \varphi^{e} g^{-d} \varphi^{-e}),
\\
v &= (\varphi^{R - (a+f+b)}g^{-b}\varphi^{-f + K_{sep} }) \cdot \varphi^{-K_{sep}}g^{-a} \cdot (\varphi^{e} g^{d} \varphi^{-e } g^{-c} \varphi^{a+b+2f - R}).
\end{aligned}
\]
Note that \[
\begin{aligned}
|g^{a}\varphi^{f} g^{b}|_{\C*} &=_{10C}(a+b) l_{g} + f l_{\varphi} \le bl_{g} + (f +b - K_{sep})l_{\varphi} \\
&\le bl_{g} + (R  - a- b - K_{sep}) l_{\varphi} =_{10C} |\varphi^{R - (a+f+b)}g^{-b}\varphi^{-f + K_{sep} } |_{\C*}, \\
|g^{c} \varphi^{e} g^{-d} \varphi^{-e}|_{\C*} &=_{10C} (c+d)l_{g} + 2e l_{\varphi}  \le (c+d)l_{g} + (2e+ (R - a-f-2b)) l_{\varphi}\\
&=_{10C} | \varphi^{e} g^{d} \varphi^{-e } g^{-c} \varphi^{a+b+2f - R}|_{\C*}.
\end{aligned}
\]
Further, $|\varphi^{-K_{sep}}|_{\C*}, |g^{-a}|_{\C*} \ge K_{sep}$. This contradicts Display \ref{eqn:contra4Ineq}.

\item $R \in \{a+b+2f, \ldots, a+ b+ 2f + c/2\}$: we have \[\begin{aligned}
\psi &= (g^{a}  \varphi^{f}  g^{b}\varphi^{-f} g^{c}) \cdot \varphi^{e} \cdot (g^{-d} \varphi^{-e}),
\\
v &= (g^{- R +a+b+2f}\varphi^{f}g^{-b}\varphi^{-f}g^{-a} \varphi^{e-K_{sep}})\cdot \varphi^{K_{sep}} g^{K_{sep}} \cdot ( g^{d-K_{sep}} \varphi^{-e } g^{R - a-b-2f - c} ).
\end{aligned}
\]
Note that \[
\begin{aligned}
|g^{a}  \varphi^{f}  g^{b}\varphi^{-f} g^{c}|_{\C*} &=_{10C}(a+b+c) l_{g} + 2f l_{\varphi} \le (a+b)l_{g} + (2f + e - K_{Sep})l_{\varphi} \\
&\le ( R - 2f) l_{g} + (2f + e - K_{sep}) l_{\varphi}=_{10C} |g^{- R +a+b+2f}\varphi^{f}g^{-b}\varphi^{-f}g^{-a} \varphi^{e-K_{sep}} |_{\C*}, \\
|g^{-d} \varphi^{-e}|_{\C*} &=_{10C} dl_{g} + e l_{\varphi}  \le (c/2 + d-K_{sep})l_{g} + e l_{\varphi}\\
&\le (d-K_{sep} + (a+b+2f + c) - R) l_{g} + e l_{\varphi} =_{10C} | g^{d-K_{sep}} \varphi^{-e } g^{R - a-b-2f - c}|_{\C*}.
\end{aligned}
\]
Further, $|\varphi^{K_{sep}}|_{\C*}, |g^{K_{sep}}|_{\C*} \ge K_{sep}$. This contradicts Display \ref{eqn:contra4Ineq}.

\item $R \in \{a+b+2f+c/2 \ldots, a+ b+ 2f + c\}$: we have \[\begin{aligned}
\psi &= (g^{a}  \varphi^{f}  g^{b}) \cdot \varphi^{-f}\cdot ( g^{c} \varphi^{e} g^{-d} \varphi^{-e}),
\\
v &= (g^{- R +a+b+2f}\varphi^{f}) \cdot g^{-b}\varphi^{-K_{sep}} \cdot (\varphi^{-f + K_{sep}}g^{-a} \varphi^{e} g^{d} \varphi^{-e } g^{R - a-b-2f - c} ).
\end{aligned}
\]
Note that \[
\begin{aligned}
|g^{a}\varphi^{f} g^{b}|_{\C*} &=_{10C}(a+b) l_{g} + f l_{\varphi} \le 0.5 c l_{g} + fl_{\varphi} \\
&\le ( R - a-b-2f) l_{g} + f l_{\varphi}=_{10C} |g^{- R +a+b+2f}\varphi^{f} |_{\C*}, \\
|g^{c} \varphi^{e} g^{-d} \varphi^{-e}|_{\C*} &=_{10C} (c+d)l_{g} + 2e l_{\varphi}  \le (a+d)l_{g} + (2e+ f-K_{sep}) l_{\varphi}\\
&\le ((R-a-b-2f-c) + a+d) l_{g} + (2e + f - K_{sep}) l_{\varphi} \\
&=_{10C} | \varphi^{-f + K_{sep}}g^{-a} \varphi^{e} g^{d} \varphi^{-e } g^{R - a-b-2f - c} |_{\C*}.
\end{aligned}
\]
Further, $|\varphi^{-K_{sep}}|_{\C*}, |g^{-b}|_{\C*} \ge K_{sep}$. This contradicts Display \ref{eqn:contra4Ineq}.

\item Remaining cases, i.e., $R \in \{a+ b+ 2f + c, \ldots, a+b+c+d+2e+2f\}$: in these cases, we again use the subword decomposition $\psi= (g^{a}  \varphi^{f}  g^{b}) \cdot \varphi^{-f}\cdot ( g^{c} \varphi^{e} g^{-d} \varphi^{-e})$. The word $v$ has decomposition $v = v_f \cdot g^{-b}\varphi^{-K_{sep}} \cdot v_{b}$, where \begin{itemize}
\item $v_{f} = \varphi^{-k} g^{-c}\varphi^{f}$ and $v_{b}= \varphi^{-f + K_{sep}}g^{-a}\varphi^{e} g^{d} \varphi^{-(e-k)}$ for some $0 \le k \le e$, or
\item $v_{f} =g^{k}\varphi^{-e} g^{-c}\varphi^{f}$ and $v_{b}= \varphi^{-f + K_{sep}}g^{-a} \varphi^{e} g^{d-k}$ for some $0 \le k \le d$, or
\item $v_{f}=\varphi^{k}g^{d}\varphi^{-e} g^{-c}\varphi^{f}$ and $v_{b}=\varphi^{-f + K_{sep}}g^{-a} \varphi^{e-k}$ for some $0 \le k \le e$.
\end{itemize}
In each of these cases, we have \[
|v_{f}|_{\C*} \ge | g^{-c}\varphi^{f}|_{\C*} =_{10C} cl_{g} + fl_{\varphi} \ge (a+b)l_{g} + fl_{g} =_{10C} |g^{a} \varphi^{f} g^{b}|_{\C*}.
\]
Moreover, we have \[
|v_{b}|_{\C*} \ge |\varphi^{-f + K_{sep}}g^{-a}|_{\C*} =_{10C} a l_{g} +  (f-K_{sep})l_{\varphi} \ge (c+d)l_{g} + 2el_{\varphi} = |g^{c} \varphi^{e} g^{-d} \varphi^{-e}|_{\C*} .
\]Further, $|\varphi^{-K_{sep}}|_{\C*}, |g^{-b}|_{\C*} \ge K_{sep}$. This contradicts Display \ref{eqn:contra4Ineq}.
\end{enumerate}

All in all, there is no $\phi_{T} \in \Homeo(S)$ realizing Display \ref{eqn:contraT} for $T = 20(K+C)/\Xi C + 100$. Hence, $\psi \not\sim \psi^{-1}$.
\end{proof}

Let us now consider $1 \ll a \ll b \ll c \le d \ll e \ll f \ll a' \ll b' \ll c' \le d' \ll e' \ll f'$ and construct  \[
\psi = g^{a} \cdot \varphi^{f} g^{b} \varphi^{-f} \cdot g^{c} \cdot \varphi^{e} g^{-d} \varphi^{-e}, \quad \psi' = g^{a'} \cdot \varphi^{f'} g^{b'} \varphi^{-f'} \cdot g^{c'} \cdot \varphi^{e'} g^{-d'} \varphi^{-e'}.
\]
Let $(p_{i})_{i \in \Z}$ be the $\{g, \varphi\}$-word sequence for $\psi$, i.e., $p_{0} = x_{0}, p_{1} = g x_{0}, \ldots, p_{a+1} = g^{a} \varphi x_{0}, \ldots, p_{a+b+c+d+2e+2f} = \psi x_{0}$, and $p_{k(a+b+c+d+2e+2f) + i}= \psi^{k} p_{i}$. Similarly define the $\{g, \varphi\}$-word sequence $(q_{i})_{i \in \Z}$ for $\psi'$. These sequences are uniform quasi-geodesics.

Suppose to the contrary that arbitrarily long subsequence of  $(q_{i})_{i \in \Z}$ is fellow traveled by a $\Homeo(S)$-translate of a subsequence of $(q_{i})_{i \in \Z}$. In particular, suppose that there exists $\phi \in \Homeo(S)$ such that $(q_{a'}, \ldots, q_{a'+f'})$ and $(\phi p_{i}, \ldots, \phi p_{j})$ are $10C$-fellow traveling for some $i<j$. Since $d_{\C*(S)}(p_{i}, p_{j})$ is at most $0.9C|j-i|$, $j-i$ is much larger than $a+b+c+d+2e+2f$ (since $f' \gg a, b, c, d, e, f$). This implies that there is a full period for $\psi$ between $i$ and $j$, of the form $\{ M (a+b+c+d+2e+2f), \ldots, (M+1) (a+b+c+d+ 2e+2f)\}$. Hence, there exists $a' < k < m < a'+f'$ such that \[
d_{\C*(S)} (\varphi^{k} x_{0}, \phi \psi^{M} g^{a} \cdot x_{0}) < 10C, \quad 
d_{\C*(S)} (\varphi^{m} x_{0}, \phi \psi^{M} g^{a} \cdot \varphi^{f} \cdot g^{b} x_{0}) < 10C.
\]
This cannot happen for large enough $f$ and $b$, as $(\ldots, \phi^{-2}, \phi^{-1}, id, g, g^{2}, \ldots) \not\sim (\varphi^{l})_{l \in \Z}$. In conclusion, $\psi \not \sim \psi'$.

Note also that every long enough subsequence of $(q_{-i} )_{i \in \Z}$ (the reversal of $(q_{i})_{i \in \Z}$) contains a translate of $(x_{0}, \varphi x_{0}, \ldots, \varphi^{f'} x_{0})$ as well. For a similar reason, we obtain $\psi^{-1} \not \sim \psi'$, given that $1 \ll a \ll b \ll \ldots \ll f \ll a' \ll b' \ll \ldots \ll f'$.

To summarize, we have: \begin{claim}\label{claim:psiInd}
For \[\begin{aligned}
1 &\ll a_{1} \ll b_{1} \ll c_{1} \le d_{1} \ll e_{1} \ll  f_{1} \\
&\ll a_{2} \ll b_{2} \ll c_{2} \le d_{2} \ll e_{2} \ll  f_{2} \ll a_{3} \ll \ldots,
\end{aligned}
\] the homeomorphisms $\psi_{i} := \psi_{a_{i}, b_{i}, c_{i}, d_{i}, e_{i}, f_{i}} \in \Homeo(S)$ satisfy that: \begin{enumerate}
\item $\psi_{i}$ fixes an open neighborhood of $p$ for each $i$;
\item $\psi_{i} \not\sim \psi_{j}^{\pm}$ for each $i<j$, and 
\item $\psi_{i} \not \sim \psi_{i}^{-1}$ for each $i$.
\end{enumerate}
\end{claim}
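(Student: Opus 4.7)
My plan is to prove the three conclusions separately; each follows from work already in place in the section, and the asymmetric hierarchy $1 \ll a_1 \ll \cdots \ll f_1 \ll a_2 \ll \cdots$ is calibrated so that all three reductions go through uniformly in $i$.

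Conclusion (1) is immediate from Claim \ref{claim:psiFixes}: each of the six factors in
\[
\psi_i = g^{a_i}\cdot\varphi^{f_i}g^{b_i}\varphi^{-f_i}\cdot g^{c_i}\cdot\varphi^{e_i}g^{-d_i}\varphi^{-e_i}
\]
pointwise fixes an open set containing $p$, since pure powers of $g$ fix the disc $U \ni p$ and the conjugates $\varphi^{f_i}g^{b_i}\varphi^{-f_i}$ and $\varphi^{e_i}g^{-d_i}\varphi^{-e_i}$ fix $\varphi^{f_i}(U) \ni p$ and $\varphi^{e_i}(U) \ni p$ (using $\varphi(p)=p$). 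A finite intersection is still an open neighborhood of $p$ and is fixed by $\psi_i$. Conclusion (3), $\psi_i \not\sim \psi_i^{-1}$, is exactly Claim \ref{claim:psiNonQuasiInv} applied to the tuple $(a_i,b_i,c_i,d_i,e_i,f_i)$; the chain $1 \ll a_i \ll b_i \ll c_i \le d_i \ll e_i \ll f_i$ matches its hypothesis verbatim.

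The substantive part is conclusion (2), $\psi_i \not\sim \psi_j^{\pm}$ for $i<j$, which I would prove by contradiction following the sketch immediately preceding the claim. Let $(p_k)_{k\in\Z}$ and $(q_k)_{k\in\Z}$ be the $\{g,\varphi\}$-word quasi-geodesics for $\psi_i$ and $\psi_j$. If $\psi_i \sim \psi_j^{\varepsilon}$ for some $\varepsilon \in \{\pm 1\}$, then by definition arbitrarily long windows of $(p_k)$ are $10C$-fellow-traveled by a $\Homeo(S)$-translate of a window of $(q_{\varepsilon k})$. Because the long $\varphi^{\pm f_j}$ and $\varphi^{\pm e_j}$ blocks of $\psi_j$ are of $\C*$-length vastly greater than the full period $\Xi_i := a_i+b_i+c_i+d_i+2e_i+2f_i$ of $\psi_i$ (guaranteed by $f_j \gg \Xi_i$), within any sufficiently long fellow-traveling window one can isolate a sub-window that pairs a pure $\varphi^{\pm}$-block of $\psi_j$ with a subword of $\psi_i^{M}$ containing at least one full period of $\psi_i$. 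Such a full period contains the distinguished subword $g^{a_i}\varphi^{f_i}g^{b_i}$, whose endpoints are separated in $\C*$ by a macroscopic distance via Fact \ref{fact:separation}(1). Extracting this pairing produces $\phi' \in \Homeo(S)$ and integers $m_1 < m_2$ with
\[
d_{\C*}\!\bigl(x_0,\,\phi' x_0\bigr) < 20C, \qquad d_{\C*}\!\bigl(g^{a_i}\varphi^{f_i} x_0,\,\phi' \varphi^{m_2-m_1} x_0\bigr) < 20C.
\]
Letting the fellow-traveling window grow forces arbitrarily deep witnesses of this form, which contradicts the non-equivalence $(\ldots,\varphi^{-1},\mathrm{id},g,g^2,\ldots) \not\sim (\varphi^{l})_{l\in\Z}$ built into the choice of $g$ via Proposition \ref{prop:quantNoQuasi}. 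The case $\varepsilon=-1$ is handled identically, since reversing $(q_k)$ still exhibits long $\varphi^{\pm}$-blocks flanked by $g$-runs.

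The main obstacle I anticipate is the combinatorial bookkeeping required to isolate, inside each sufficiently long fellow-traveling window of $(q_{\varepsilon k})$, a sub-window of the desired form. This is completely analogous to the case analysis in the proof of Claim \ref{claim:psiNonQuasiInv}: one slides a window of length $\Xi_i$ along a $\varphi^{\pm}$-block of $\psi_j$ and classifies its alignment modulo the period. The cushion $f_j - \Xi_i \gg K_{\mathrm{sep}}$ is what guarantees that such a sub-window can always be found uniformly in the pair $i<j$, completing the proof.
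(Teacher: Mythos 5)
Your handling of conclusions (1) and (3) is correct and agrees with the paper: (1) is Claim~\ref{claim:psiFixes} applied to each $\psi_i$, and (3) is Claim~\ref{claim:psiNonQuasiInv} applied to each tuple. For conclusion (2) your outline also matches the paper's pre-claim paragraph: assume a $\sim$-matching of $\psi_i$ with $\psi_j^{\pm}$, locate a long $\varphi^{\pm f_j}$-block of $\psi_j$'s word axis that fellow-travels a translate of a chunk of $\psi_i$'s axis containing a full period (possible because $f_j\gg\Xi_i$), and then read a turnaround inside that period against the straight $\varphi$-run to violate the non-equivalence built into the choice of~$g$.

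Two slips are worth flagging. First, your extracted data $d_{\C*}(x_0,\phi'x_0)<20C$ and $d_{\C*}(g^{a_i}\varphi^{f_i}x_0,\phi'\varphi^{m_2-m_1}x_0)<20C$ has its unique turnaround at $g^{a_i}x_0$; after normalizing there, fellow-traveling of $[x_0,g^{a_i}\varphi^{f_i}x_0]$ and $\phi'[x_0,\varphi^{m_2-m_1}x_0]$ produces a matching of $(\ldots,g^{-2},g^{-1},\mathrm{id},\varphi,\varphi^2,\ldots)$ restricted to $[-a_i,f_i]$ against translates of $(\varphi^l)_l$. That contradicts the \emph{second} relation in Display~\ref{eqn:nonInd}, not the first one you cite. (The paper instead fixes the pair $\phi\psi^Mg^a x_0\approx\varphi^k x_0$ and $\phi\psi^Mg^a\varphi^fg^bx_0\approx\varphi^m x_0$, whose turnaround sits at $g^a\varphi^f x_0$ and yields the first relation.) Either turnaround works, so this is a citation error rather than a gap. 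Second, the phrase ``letting the fellow-traveling window grow forces arbitrarily deep witnesses'' is misleading: however large the window, the turnaround pattern $(\ldots,g^{-1},\mathrm{id},\varphi,\ldots)$ extends only to depths $a_i$ on one side and $f_i$ on the other, and these are fixed once $\psi_i$ is fixed --- larger windows only change $M$, not the depth of the witness. What actually closes the argument is that the matching constant is uniform (of order $C$), so the critical depth below which the $\not\sim$-relation in Display~\ref{eqn:nonInd} would fail to manifest depends only on $g$, $\varphi$, and that constant, and the hypothesis $1\ll a_i\ll\cdots$ ensures $a_i$ and $f_i$ (or $f_i$ and $b_i$, for the paper's turnaround) exceed that fixed threshold. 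With those two corrections your proposal coincides with the paper's argument.
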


Now let $D$ be a closed disc embedded in $S$. One can conjugate $\psi_{i}$ to a homeomorphism $\Psi_{i}$ (in $\Homeo(S)$) that fixes $D$, by a conjugator that sends $D$ into the fixed point set of $\psi_{i}$, a neighborhood of $p$. It follows that $\psi_{i}^{\pm 1} \sim \Psi_{i}^{\pm 1}$. Since $\sim$ is a transitive relation, we conclude:

\begin{prop}\label{prop:DUnifInd}
Let $D$ be a closed disc embedded in $S$. Then there exist infinitely many homeomorphisms $\Psi_{1}, \Psi_{2}, \ldots \in \Homeo(S)$ such that: \begin{enumerate}
\item $\Psi_{i}$ fixes $D$ for each $i$;
\item $\Psi_{i} \not\sim \Psi_{j}^{\pm}$ for each $i<j$, and 
\item $\Psi_{i} \not \sim \Psi_{i}^{-1}$ for each $i$.
\end{enumerate}
\end{prop}

As observed by Brooks \cite{MR624804} and generalized by Bestvina and Fujiwara for WPD actions on $\delta$-hyperbolic spaces \cite{bestvina2002bounded}, one can construct countably infinitely many linearly independent quasi-morphisms $Q_{1}, Q_{2}, \ldots$ that are unbounded on $Fix_{D}$. Namely, for each $i$, there exists a Brooks quasi-morphism that assigns nonzero value on some power of $\Psi_{i}$, but assigns zero value on any power of $\Psi_{j}$ for $j \neq i$. Since $\Psi_{i}^{k} \in Fix_{D}$ for each $k$, we conclude:

\begin{cor}\label{cor:inf}
Let $S = \mathbb{T}^{2}$ be the torus and let $D$ be an embedded closed disc on $S$. Let $Fix_{D} \le \Homeo(S)$ be the group of homeomorphisms that preserve $D$ pointwise. Then the space $\widetilde{QH}(\Homeo(S); Fix_{D} \cap \Homeo_{0}(S))$ of quasi-morphisms on $\Homeo(S)$ that is unbounded on $Fix_{D} \cap \Homeo_{0}(S)$ is infinite dimensional.
\end{cor}

%
%

\medskip
\bibliographystyle{alpha}
\bibliography{wpd}

\end{document}